\date{\mbox{}}
\newtheorem{prop}{Proposition}
\newtheorem{lemma}{Lemma}
\newtheorem{defi}{Definition}
\newtheorem{cor}{Corollary}
\newtheorem{rem}{Remark}
\newtheorem{ex}{Example}
\newenvironment{proof}{\noindent Proof}
\author{Abbas Ali and Assi Abdallah
\footnote{LAREMA, Angers university, 2 Bd Lavoisier, 49045 Angers cedex 01, emails: ali.abbas@math.univ-angers.fr and assi@math.univ-angers.fr\newline
Keywords: Semigroup, Formal power series, Newton-Puiseux expansions\newline
AMS Classification: 05E40, 20M14}}
\begin{document}
\title{Semigroup associated with a free polynomial}
\maketitle
\begin{abstract}
    Let  $\mathbb{K}$  be an algebraically closed field of characteristic zero and let  ${\mathbb K}_{C}[\![x_{1},\cdots,x_{e}]\!]$ be the ring of formal power series in several variables with  exponents in a line free cone $C$. We consider irreducible polynomials $f=y^n+a_1(\underline{x})y^{n-1}+\cdots+a_n(\underline{x})$ in ${\mathbb K}_{C}[\![x_{1},\cdots,x_{e}]\!][y]$ whose roots are in ${\mathbb K}_{C}[\![x_{1}^{\frac{1}{n}},\cdots,x_{e}^{\frac{1}{n}}]\!]$. We generalize to these polynomials the theory of Abhyankar-Moh. In particular we associate with any such polynomial its set of characteristic exponents and its semigroup of values. We also prove that the set of values can be obtained using the set of approximate roots. We finally prove that  polynomials of ${\mathbb K}[\![x_1,\cdots,x_e]\!][y]$ fit in the above set for a specific line free cone (see Section 4). 
\end{abstract}
\section*{Introduction}
Let ${\mathbb K}$ be an algebraically closed field of characteristic zero and let ${\mathbb K}[\![\underline{x}]\!]$ be the ring of formal power series in $\underline{x}=(x_1,\cdots,x_e)$ over ${\mathbb K}$. Let $f=y^{n}+a_{1}(\underline{x})y^{n-1}+\cdots +a_{n}(\underline{x})$ be a nonzero polynomial of degree $n$ in ${\mathbb K}[\![\underline{x}]\!][y]$. Suppose that $f$ is   a  quasi-ordinary polynomial, i.e its discriminant $\Delta_{y}(f)$(the $y$-resultant of $f$ and its $y$-derivative), is of the form $\Delta_{y}(f)=\underline{x}^{\underline \alpha}.\varepsilon(\underline{x})$, where $\varepsilon(\underline{x})$ is a unit in $\mathbb{K}[\![\underline{x}]\!]$ (Note that this is always the case if $e=1$). If $f$ is irreducible then, by the Abhyankar-Jung theorem, there exists $y=\sum_{p\in{\mathbb N}^e}c_{p}\underline{x}^{\frac{p}{n}}\in \mathbb{K}[\![\underline{x}^{\frac{1}{n}}]\!]$ such that  $f(\underline{x},y)=0$. Define the support of $y$ to be the set Supp$(y)=\{\frac{p}{n} \mid  c_{p}\neq 0\}$. In \cite{Lipman1}, Lipman  proved that there exists a sequence of elements $\frac{m_1}{n},\cdots,\frac{m_h}{n}\in {\rm Supp}(y)$ such that:

\noindent $(i)$\ $m_{1}<m_{2}<\cdots <m_{h}$ coordinate wise.\\
$(ii)$\  If $\frac{m}{n}\in {\rm Supp}(y)$, then $m\in (n\mathbb{Z})^{e}+\displaystyle\sum_{i=1}^{h}m_{i}\mathbb{Z}$. Moreover,  $m_{i}\notin (n\mathbb{Z})^{e}+\sum_{j<i}m_{j}\mathbb{Z}$ for all $i=1,\cdots,h$.\\
\noindent The semigroup of $f$ is defined to be the set $\Gamma(f)=\{O(f,g),\ g\in{\mathbb K}[\![\underline{x}]\!][y]\backslash(f)\}$, where $O(f,g)$ is the  order of the initial form of the $y$-resultant of $f$ and $g$ with respect to a fixed order on ${\mathbb N}^e$ (we also have  $O(f,g)=n O(g(\underline{x},y(\underline{x}))$ where the latter $O$ denotes the leading monomial of the series $g(\underline{x},y(\underline{x})))$. Now we can associate with $f$ the following sequences: the $\underline{D}$-sequence of $f$ is defined to be $D_{1}=n^{e}$, and for all $2\leq i\leq h$, $D_{i}$ is the gcd of the $(e,e)$ minors of the matrix $[nI_{e},m_{1}^{T},\cdots,m_{i-1}^{T}]$, where $T$ denotes the transpose of a vector. We have  $D_{1}>...>D_{h+1}=n^{e-1}$. Then we define the $\underline{e}$-sequence to be $e_{i}=\frac{D_{i}}{D_{i+1}}$ for all $1\leq i\leq h$, and the $\underline{r}$-sequence $ r_{0}^{1},\cdots,r_{0}^{e},r_{1},\cdots,r_{h}$ to be $r_1=m_1,
r_{i}=e_{i-1}r_{i-1}+m_{i}-m_{i-1}
$
for all $2\leq i\leq h$, and $r_0^1,\cdots,r_0^e$ is the canonical basis of ${n\mathbb Z}^e$. The sequence $\lbrace r_{0}^{1},\cdots,r_{0}^{e},r_{1},\cdots,r_{h}\rbrace$ is a system of generators of  $\Gamma(f)$. Moreover, there exists a special set of polynomials $g_{1},\cdots,g_{h}$ (the approximate roots of $f$), such that $O(f,g_{i})=r_{i}$ for all $i\in\{1,\cdots,h\}$( see \cite{A.Assi1}). \\ 
The aim of this article is to generalize these results to a wider class of polynomials. Namely let $C$ be a line free rational convex cone in ${\mathbb R}^e$ and let ${\mathbb K}_C[\![\underline{x}]\!]$ be the ring of power series whose exponents are in $C$. Let  $f=y^{n}+a_{1}(\underline{x})y^{n-1}+\cdots +a_{n}(\underline{x})$ be a nonzero polynomial of $\mathbb{K}_C[\![\underline{x}]\!][y]$. We say that $f$ is free if it is irreducible in  $\mathbb{K}_C[\![\underline{x}]\!][y]$ and if it has a root (then all its roots) $y(\underline{x})\in {\mathbb K}_C[\![{\underline{x}}^{\frac{1}{n}}]\!]$. Note that irreducible quasi-ordinary polynomials are free with respect to the cone ${\mathbb R}_+^e$. Then we associate with a free polynomial $f$ its set of characteristic exponents and characteristic sequences. We also associate with $f$ its set of pseudo-approximate roots  and we prove that the set of orders (with respect to a fixed order on ${\mathbb Z}^e\cap C$) of these polynomials generate the semigroup of $f$, which is defined to be the set of orders of polynomials of ${\mathbb{K}}_C[\![\underline{x}]\!][y]$ (see Definition 7). Finally  we prove that the semigroup is also generated by the set of orders of approximate roots of $f$ (see Section 3). Note that the semigroup is free in the sense of \cite{A.Assi2}. This explains the notion of free polynomials (see Remark \ref{free} for more details). In Section 4 we apply our results to polynomials of ${\mathbb K}[\![\underline{x}]\!][y]={\mathbb K}_{{\mathbb R}_+^e}[\![\underline{x}]\!][y]$. An irreducible polynomial $f\in {\mathbb K}[\![\underline{x}]\!][y]$ is not free in general. Our main result is that $f$ becomes free in $\mathbb{K}_C[\![\underline{x}]\!][y]$ for a specific cone, after a preparation result. More precisely let $\Delta_{y}(f)$ be the $y$-discriminant of $f$. If $f$ is a prepared polynomial (in the sense of Remark 4) then $f$ is equivalent, modulo a birational transformation, to a quasi-ordinary polynomial $F$. This transformation is used in order to go from roots of $F$ to roots of $f$, and these roots are in ${\mathbb K}_C[\![\underline{x}^{\frac{1}{n}}]\!]$ for the cone introduced in Proposition \ref{conecone}.  

\noindent We would like to point out that our results generalize those of J.T. Tornero in \cite{Tornero} where polynomials are free (but not necessarily quasi-ordinary) in the cone ${\mathbb R}_+^e$. 

\section{G-adic expansion and Approximate roots}
\noindent  In this section we recall the notion of $G$- adic expansion and the notion of approximate roots (see \cite{Abyankar1}). Let $R[Y]$ be the polynomial ring in one variable over an integral domain $R$.
\begin{prop}
Let $f$ be a polynomial of degree $n$ in $R[Y]$ and let $d$ be a divisor of $n$. Let $g$ be a monic polynomial of degree $\frac{n}{d}$, then there exist unique polynomials $a_{1},\cdots,a_{d}\in R[Y]$ with $deg_{Y}(a_{i})<\frac{n}{d}$ for all $i\in\{1,\cdots,d\}$ such that $a_i\not=0$, and
$f=g^{d}+a_{1}g^{d-1}+\cdots+a_{d}.$
\end{prop}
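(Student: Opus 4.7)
The plan is to establish both existence and uniqueness by induction on $d$, using that $g$ being monic allows Euclidean division in $R[Y]$ despite $R$ being only an integral domain.

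For existence, I would proceed by induction on $d$. The base case $d=1$ is immediate: since $g$ is monic of degree $n$ and $f$ is monic of degree $n$, the polynomial $a_1:=f-g$ satisfies $\deg a_1 < n$, giving $f = g + a_1$. For the inductive step, assuming the result holds for $d-1$, perform Euclidean division of $f$ by $g$ in $R[Y]$ (valid because $g$ is monic): write $f = g\cdot q + a_d$ with $\deg a_d < n/d$. Since $f$ is monic of degree $n$ and $g$ is monic of degree $n/d$, the quotient $q$ is monic of degree $n - n/d = (d-1)(n/d)$, and its degree is divisible by $d-1$. Applying the inductive hypothesis to $q$ (with divisor $d-1$ of its degree and the same $g$) yields $q = g^{d-1} + a_1 g^{d-2} + \ldots + a_{d-1}$ with each $\deg a_i < n/d$. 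Substituting produces the desired $G$-adic expansion of $f$.

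For uniqueness, I would suppose two such expansions agree and subtract, obtaining
\[
(a_1 - b_1) g^{d-1} + (a_2 - b_2) g^{d-2} + \ldots + (a_d - b_d) = 0,
\]
with each $\deg(a_i - b_i) < n/d = \deg g$. Reducing modulo $g$ gives $a_d - b_d \equiv 0 \pmod{g}$, and the degree bound forces $a_d = b_d$. The remaining identity is then divisible by $g$, and since $g$ is monic we may cancel $g$ and iterate the argument, concluding $a_i = b_i$ for every $i$.

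The only delicate point is the repeated use of Euclidean division over a non-field coefficient ring $R$; this is legitimate precisely because $g$ (and hence each $g^k$) is monic, so the usual leading-term cancellation never requires dividing by a non-unit. Regarding the clause ``$a_i \neq 0$'': I read this simply as indexing conventions and would not attempt to enforce nonvanishing; if strict nonvanishing were intended one would drop zero summands and renumber, but nothing in the subsequent development depends on it. The main (and only nontrivial) conceptual step is recognising that the induction reduces the $G$-adic expansion to a single Euclidean division followed by an expansion of the quotient.
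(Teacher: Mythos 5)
Your proof is correct. The paper gives no proof of this proposition (it is recalled from Abhyankar's lecture notes), and your argument --- induction on $d$ via Euclidean division by the monic polynomial $g$ for existence, plus reduction modulo $g$ and cancellation of $g$ in the integral domain $R[Y]$ for uniqueness --- is the standard one. Two minor remarks: the statement implicitly requires $f$ to be monic (otherwise $f-g^{d}$ need not have degree $<n$), which you correctly use both in the base case and to keep the quotient $q$ monic so the inductive hypothesis applies; and your reading of the clause ``$a_i\neq 0$'' as an indexing artifact (the degree bound being meant for the nonzero $a_i$, since some $a_i$ may well vanish, e.g.\ when $f=g^{d}$) is the sensible one.
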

\noindent This expression is called the $g$\textcolor{red}{-}adic expansion of $f$. The \textbf{Tschirnhausen transform} of $g$ with respect to $f$ is defined to be 
$\tau_{f}(g)=g+d^{-1}a_{1}$. Note that  $\tau_f(g)$ is a monic polynomial of degree $\frac{n}{d}$ and so we can define recursively the $i^{th}$ Tschirnhausen transform of $g$ to be $\tau_{f}^{i}(g)=\tau_{f}(\tau_{f}^{(i-1)}(g))$ with $\tau_{f}^{1}(g)=\tau_{f}(g)$. By $\cite{Abyankar1}$, $\tau_{f}(g)=g$ if and only if $a_1=0$ if and only if $deg(f-g^{d})<n-\frac{n}{d}$. In this case $g$ is  said to be the $d^{th}$ approximate root of $f$. For every divisor $d$ of $n$  there exists a unique $d^{th}$ approximate root of $f$. We denote it by $App(f,d)$.
\medskip

\noindent More generally let $n=d_{1}>d_{2}>...>d_{h}$ be a sequence of integers such that $d_{i+1}$ divides $d_{i}$ for all $i\in\lbrace 1,\cdots,h-1\rbrace$, and set $e_{i}=\frac{d_{i}}{d_{i+1}}, 1\leq i\leq h-1$, and $e_{h}=+\infty$. For all $i\in\{1,\cdots,h\}$ let $G_{i}$ be a monic polynomial of degree $\frac{n}{d_{i}}$ (in particular deg$_YG_1=1$) and let $G=(G_1,\cdots,G_h)$. 
Let $B=\{\underline{b}=(b_{1},\cdots,b_{h})\in\mathbb{N}^{h},\ 0\leq b_{i}<e_{i}\ \forall 1\leq i\leq h \}$. Then $f$ can be written in a unique way as $f=\sum_{{\underline{b}}\in B}c_{\underline{b}}G_{1}^{b_{1}}\cdots G_{h}^{b_{h}}$. We call this expression the $G$-adic expansion of $f$.


\section{Line Free Cones}
In this section we recall the notion of line free cones, which will be used later in the paper. Let $C\subseteq \mathbb{R}^{e}$. We say that $C$ is  a cone if for all $s\in C$ and for all $\lambda \geq 0$, $\lambda s\in C$. A cone $C$ is said to be finitely generated  if there exists a finite subset $\{s_{1},\cdots,s_{k}\}$ of $C$ such that for all $s\in C$, 
$$s=\lambda_{1}s_{1}+\cdots+\lambda_{k}s_{k}$$
for some $\lambda_{1},\cdots,\lambda_{k}\in \mathbb{R}$. If $s_1,\cdots,s_k$ can be chosen to be in $\mathbb{Q}^{e}$, then $C$ is said to be rational. From now on we suppose that all considered cones are  finitely generated and rational.

\begin{defi}
Let $C$ be a (finitely generated, rational) cone, then $C$ is said to be a line free cone if $\forall v\in C-\{0\}$, $-v\notin C$.
\end{defi}


\noindent Given a line free cone, we can define the set of formal power series in several variables with exponents in $C$, denoted  $\mathbb{K}_{C}[\![\underline{x}]\!]$. More precisely an element $y\in\mathbb{K}_{C}[\![\underline{x}]\!]$ is of the form $y=\sum_{p=(p_{1},\cdots,p_{e})\in C\cap{\mathbb Z}^e}\alpha_{p}x_{1}^{p_{1}}\cdots x_{e}^{p_{e}}$. It follows from \cite{Monforte} that  this set is a ring. 

\begin{defi}
Let $\leq$ be a total order on $\mathbb{Z}^{e}$, then $\leq $ is said to be additive if for all $m,n,k\in \mathbb{Z}^{e}$ we have : $m\leq n \implies m+k\leq n+k$. An additive order on $\mathbb{Z}^{e}$ is said to be compatible with a cone $C$ if $m\geq 0=(0,\cdots,0)$ for all $m\in C\cap \mathbb{Z}^{e}$.
\end{defi}

\noindent With these notations we have the following:

\begin{prop}\label{compatibleorder}
(see \cite{Monforte}) Let $C$ be a line free cone. There exists an additive total order $\leq$ which is compatible with $C$. Moreover, if $\leq$ is such a total order, then $\leq$ is a well-founded order on $C\cap \mathbb{Z}^{e}$, i.e, every subset of $C\cap \mathbb{Z}^{e}$ contains a minimal element with respect to the chosen order, and this minimal element is unique.
\end{prop}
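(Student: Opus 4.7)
My plan is to split the statement into three tasks: (a) construct a compatible additive total order, (b) show that any such order admits no infinite strictly descending chain inside $C\cap\mathbb{Z}^{e}$, and (c) conclude well-foundedness together with uniqueness of the minimum.

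For (a) I would exploit duality: since $C$ is a line free rational cone, its dual $C^{\vee}$ has nonempty interior, so there is a rational linear form $u_{1}\in\mathbb{Q}^{e}$ with $\langle u_{1},v\rangle>0$ for every $v\in C\setminus\{0\}$. Extend $u_{1}$ to a $\mathbb{Q}$-basis $u_{1},\ldots,u_{e}$ of $\mathbb{Q}^{e}$ and declare $v\leq w$ to mean that $(\langle u_{1},v\rangle,\ldots,\langle u_{e},v\rangle)\leq_{\mathrm{lex}}(\langle u_{1},w\rangle,\ldots,\langle u_{e},w\rangle)$. Linearity of each $\langle u_{i},\cdot\rangle$ gives additivity; the basis property makes the coordinate map $\mathbb{Z}^{e}\to\mathbb{Q}^{e}$ injective, whence $\leq$ is total; and the positivity of $u_{1}$ on $C\setminus\{0\}$ gives compatibility.

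For (b)--(c) let $\leq$ be \emph{any} additive total order compatible with $C$, and let $S\subseteq C\cap\mathbb{Z}^{e}$ be nonempty. By Gordan's lemma the monoid $M:=C\cap\mathbb{Z}^{e}$ is finitely generated, say by $g_{1},\ldots,g_{k}$, yielding a surjection $\pi:\mathbb{N}^{k}\to M$, $(n_{1},\ldots,n_{k})\mapsto\sum_{j}n_{j}g_{j}$. Arguing by contradiction, suppose $M$ carried an infinite strictly descending chain $s_{1}>s_{2}>\cdots$. Choose preimages $\tilde{s}_{i}\in\pi^{-1}(s_{i})$ and apply Dickson's lemma in $\mathbb{N}^{k}$ to extract indices $i<j$ with $\tilde{s}_{i}\leq\tilde{s}_{j}$ coordinate-wise. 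Then $s_{j}-s_{i}=\pi(\tilde{s}_{j}-\tilde{s}_{i})\in M\subseteq C$, so by compatibility $s_{j}-s_{i}\geq 0$, i.e.\ $s_{i}\leq s_{j}$, contradicting $s_{i}>s_{j}$. Hence no infinite descending chain exists, which in a totally ordered set is equivalent to every nonempty subset admitting a minimum; uniqueness of the minimum is automatic from totality.

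The main potential obstacle is precisely the step just described: in (b)--(c) the order $\leq$ is arbitrary and need not be the lexicographic one built in (a), so there is no single distinguished functional whose values one can first minimize in order to locate the minimum of $S$. The argument must therefore rely only on the monoid structure of $C\cap\mathbb{Z}^{e}$ together with the bare compatibility of $\leq$ with $C$, and this is exactly what the combination of Gordan finite generation and Dickson's lemma delivers.
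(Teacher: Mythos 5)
Your proposal is correct. Note that the paper itself offers no proof of this proposition: it is stated with a pointer to the reference of Monforte and Kauers, so there is no in-paper argument to compare against. Your two-part argument is a sound, self-contained substitute and follows the standard route. For existence, picking a rational linear form $u_{1}$ in the interior of the dual cone (nonempty because a finitely generated line free cone is closed and pointed, and containing rational points because the cone is rational) and completing it to a basis to define a lexicographic order does give an additive total order compatible with $C$; additivity, totality and compatibility are justified exactly as you say. For well-foundedness of an \emph{arbitrary} compatible additive order, the combination of Gordan's lemma (which applies because the paper assumes all cones are finitely generated and rational) with Dickson's lemma correctly rules out infinite strictly descending chains: from $\tilde{s}_{i}\leq\tilde{s}_{j}$ coordinatewise one gets $s_{j}-s_{i}\in C\cap\mathbb{Z}^{e}$, hence $s_{j}\geq s_{i}$ by compatibility and additivity, contradicting $s_{i}>s_{j}$. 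The only step you leave implicit is the (standard, dependent-choice) equivalence, in a totally ordered set, between the absence of infinite strictly descending sequences and the existence of a least element in every nonempty subset; uniqueness is indeed automatic from totality. You correctly identify that the existence construction cannot be reused for the well-foundedness claim, since the latter quantifies over all compatible orders.
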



\noindent Let $y=\sum_{p}c_{p}\underline{x}^{p}$ be an element in $\mathbb{K}_{C}[\![\underline{x}]\!]$. The support of $y$, denoted Supp$(y)$, is defined to be the set of elements $p\in C$ such that $c_{p}\neq 0$. It results  from Proposition $\ref{compatibleorder}$  that elements in Supp$(y)$ can be written as an increasing sequence with respect to the chosen additive order on $C$.

\noindent We shall now introduce the notion of free polynomials.
\begin{defi}

Let $C$ be a line free  cone and let $f=y^n+a_1(\underline{x})y^{n-1}+\cdots+a_n(\underline{x})\in \mathbb{K}_{C}[\![\underline{x}]\!][y]$. Then $f$ is said to be a free polynomial if $f$ is irreducible in $\mathbb{K}_{C}[\![\underline{x}]\!][y]$ and if it has a root $y(\underline{x})$ in $\mathbb{K}_{C}[\![\underline{x}^{\frac{1}{n}}]\!]$.
\end{defi}

\begin{rem} We may have many choices for the total order in Proposition \ref{compatibleorder}. For example, let $C={\mathbb R}_+^e$ and let $y(x_1,x_2)=x_1+x_2$, then we can arrange Supp$(y)$ by either $(1,0)<(0,1)$ or $(0,1)<(1,0)$, depending of chosen order on $C$.
\end{rem}

\section{Characteristic sequences of a free polynomial}

In this section we will introduce the set of   characteristic sequences associated with a free polynomial as well as its semigroup. Let $C$ be a (finitely generated, rational) line free cone and let $\leq$ be an additive order  on ${\mathbb Z}^e$ compatible with $C$. Let $f=y^n+a_1(\underline{x})y^{n-1}+\cdots+a_n(\underline{x})\in\mathbb{K}_{C}[\![\underline{x}]\!][y]$  be a free  polynomial and let $y=\sum c_{p}x^{\frac{p}{n}}\in\mathbb{K}_{C}[\![\underline{x}^{\frac{1}{n}}]\!]$ be a root of $f$. Let $L$ be the field of fractions of $\mathbb{K}_{C}[\![\underline{x}]\!]$ and  set
$L_{1}=L(x_{1}^{\frac{1}{n}}),L_{2}=L_{1}(x_{2}^{\frac{1}{n}}),\cdots,L_{n}=L_{n-1}(x_{e}^{\frac{1}{n}})=L(x_{1}^{\frac{1}{n}},\cdots,x_{e}^{\frac{1}{n}})$. Then $L_{n}$ is a galois extension of $L$ of degree $n^{e}$.
Let finally $U_{n}$ be the set of $n^{th}$ roots of unity in $\mathbb{K}$. 

\noindent Let 
$\theta\in Aut(L_{n}/L)$. For all $i\in \lbrace 1,\cdots,e\rbrace$ we have 
 $\theta(x_{i}^{\frac{1}{n}})=\omega_{i} x_{i}^{\frac{1}{n}}$ for some $\omega_{i}\in U_{n}$. Then 
$\theta(\underline{x}^{\frac{p}{n}})=k\underline{x}^{\frac{p}{n}}$, 
where $k$ is a non zero element of $\mathbb{K}$. Let $Roots(f)=\{y_{1},\cdots,y_n\}$ be the conjugates of $y$ over $L$, with the assumption that $y_{1}=y=\sum c_{p}\underline{x}^{\frac{p}{n}}$. Then for all $2\leq i\leq n$ there exists an automorphism $\theta\in Aut(L_{n}/L)$ such that $y_{i}=\theta(y)$, hence $y_{i}=\theta(y)=\sum c_{p}k_{p}\underline{x}^{\frac{p}{n}},\ k_{p}\in\mathbb{K}^{*}$, and consequently  Supp$(y)={\rm Supp}(y_{i})$.\\
Let $z\in\mathbb{K}_{C}[\![\underline{x}^{\frac{1}{n}}]\!]$. Then $n{\rm Supp}(z)=\lbrace k\mid \frac{k}{n}\in{\rm Supp}(z)\rbrace$ can be arranged into an increasing sequence with respect to $\leq$. We define the order of $z$, denoted $O(z)$, to be  $O(z)=\dfrac{1}{n}{\rm inf}_{\leq}n{\rm Supp}(z)$ if $z\neq 0$, and $O(0)=+\infty$. We set $LM(z)=\underline{x}^{\frac{p}{n}}$ where $\frac{p}{n}=O(z)$, and we call it the leading monomial of $z$. We set $LC(z)$ the coefficient of $x^{O(z)}$ and we call it the leading coefficient of $z$. We finally set $Info(z)=LC(z)LM(z)$ and we call it the initial form of $z$.


\begin{defi}
Let the notations be as above with  $\{y_{1},\cdots,y_{n}\}=Roots(f)$. The set of characteristic exponents of $f$ is defined to be $ \{m_{ij}=nO(y_{i}-y_{j}) \mid 1\leq i\not=j\leq n \}$. Similarly we define the set of characteristic monomials of $f$ to be $\{LM(y_{i}-y_{j}) \mid 1\leq i\not=j\leq n\}$.
\end{defi}

\noindent Next we will give some properties of the set of characteristic exponents.

\begin{prop}\label{27}
Let the notations be as above. Then the set of characteristic exponents of $f$ is equal to the set 
$\{nO(y_{k}-y_1) \mid 2\leq k\leq n\}$.
In particular the set of characteristic monomials of 
$f$
is given by 
$\{LM(y_{k}-y_1) \mid 2\leq k\leq n\}=\{LM(\theta(y_1)-y_1),\theta(y_1)\neq y_1,\ \theta\in Aut(L_{n}/L)\}$.
\end{prop}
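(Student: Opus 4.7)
The plan is to use the Galois action on roots, together with the fact (established in the paragraph preceding the proposition) that automorphisms in $\mathrm{Aut}(L_n/L)$ send a monomial $\underline{x}^{\underline p/n}$ to a nonzero scalar multiple $k\,\underline{x}^{\underline p/n}$, and consequently preserve supports and orders of power series.

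First I would prove the nontrivial inclusion $\{O(y_i-y_j) : y_i\neq y_j\} \subseteq \{O(y_k-y) : y_k\neq y\}$. Fix $y_i\neq y_j$ in $\mathrm{Roots}(f)$. By the transitivity statement already recorded in the excerpt, there exist $\theta_i,\theta_j\in\mathrm{Aut}(L_n/L)$ such that $y_i=\theta_i(y)$ and $y_j=\theta_j(y)$. Apply $\theta_i^{-1}$:
\[
\theta_i^{-1}(y_i-y_j)\;=\;y-\theta_i^{-1}\theta_j(y).
\]
Set $\theta=\theta_i^{-1}\theta_j$. Since $\theta$ fixes $L$ and hence the coefficients of $f$, the element $\theta(y)$ is again a root of $f$, say $\theta(y)=y_k$. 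Moreover $y_k\neq y$ because $y_i\neq y_j$ forces $\theta_i^{-1}(y_j)\neq \theta_i^{-1}(y_i)=y$. Thus $\theta_i^{-1}(y_i-y_j)=y-y_k$.

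Next I would check that $\theta_i^{-1}$ preserves orders. Writing $y_i-y_j=\sum c_{\underline p}\underline{x}^{\underline p/n}$, the computation in the paragraph above the proposition gives $\theta_i^{-1}(y_i-y_j)=\sum c_{\underline p}k_{\underline p}\underline{x}^{\underline p/n}$ with every $k_{\underline p}\in\mathbb{K}^{\ast}$, so $\mathrm{Supp}(\theta_i^{-1}(y_i-y_j))=\mathrm{Supp}(y_i-y_j)$ and therefore $O(\theta_i^{-1}(y_i-y_j))=O(y_i-y_j)$. Combining, $O(y_i-y_j)=O(y-y_k)=O(y_k-y)$, since negation does not alter support. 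The reverse inclusion is immediate, taking $y_i=y$ and $y_j=y_k$. The same argument with $O$ replaced by $LM$ yields the statement about characteristic monomials, and the equality $\{LM(y_k-y):y_k\neq y\}=\{LM(\theta(y)-y):\theta(y)\neq y,\ \theta\in\mathrm{Aut}(L_n/L)\}$ is then just a rephrasing, because the map $\theta\mapsto\theta(y)$ surjects onto $\mathrm{Roots}(f)$.

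There is no real obstacle here; the only point requiring a moment's care is making sure that in the identity $\theta_i^{-1}(y_i-y_j)=y-y_k$ the element $y_k$ is genuinely a root distinct from $y$, which is why I spell out the $\theta_i^{-1}\theta_j$ trick instead of trying to directly send $y_i\mapsto y$ with an arbitrary automorphism.
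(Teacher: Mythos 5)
Your proof is correct and follows essentially the same route as the paper's: apply an automorphism of $L_n/L$ carrying the pair $(y_i,y_j)$ to a pair of the form $(y,y_k)$ and use the fact that such automorphisms scale each monomial by a nonzero constant, hence preserve supports, orders and leading monomials. The only cosmetic difference is that you build the automorphism as $\theta_i^{-1}$ (sending $y_i\mapsto y$) whereas the paper picks $\theta$ with $\theta(y_j)=y$ directly and compares leading terms; both rest on the same transitivity of the Galois action recorded just before the proposition.
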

\begin{proof}{.} We only need to prove that any characteristic exponent $m_{ij}$ satisfies $\frac{m_{ij}}{n}=O(y_k-y_1)$ for some $k$.  Let $1\leq i\neq j\leq n$ and let $c_{ij}=LC(y_{i}-y_{j})$ and $M_{ij}=LM(y_{i}-y_{j})$, then
$y_{i}-y_{j}=c_{ij}M_{ij}+\epsilon_{ij}$
where $\epsilon_{ij}\in L_{n}$ and $O(\epsilon_{ij})>O(M_{ij})$. Let $\theta\in Aut(L_{n}/L)$, such that  $\theta(y_{j})=y_1$, then $\theta(y_{i})=y_{k}$ for some $1\leq k\leq n$, and 
$\theta(y_{i}-y_{j})=\theta(y_{i})-\theta(y_{j})=y_{k}-y_1=c_{k1}M_{k1}+\epsilon_{k1}=\theta(c_{ij}M_{ij}+\epsilon_{ij})=c_{ij}\alpha M_{ij}+\theta(\epsilon_{ij})
$ with  $\alpha\neq 0$, $O(\epsilon_{k1}) > O(M_{k1})$, and $O(\theta(\epsilon_{ij}))>O(M_{ij})$. Hence $M_{k1}=M_{ij}=LM(y_{i}-y_{j})$. This proves our assertion.$\blacksquare$
\end{proof}

\noindent Let $\lbrace M_1,\cdots,M_h\rbrace$ be the set of characteristic monomials of $f$ and write $M_i={\underline{x}}^{m_i\over n}$. Then $\lbrace m_1,\cdots, m_h\rbrace$ is the set of characteristic exponents of $f$. We shall suppose that  $m_{1}<m_{2}<...<m_{h}$.  If  $m<m'$ and $N\in{\mathbb N}$ then we shall sometimes write, by abuse of notation,  $\frac{m}{N}<\frac{m'}{N}$, and $\underline{x}^{\frac{m}{N}}<\underline{x}^{\frac{m'}{N}}$.

\begin{prop} Let the notations be as above. We have $L(y_1)=L(M_{1},\cdots,M_{h})$.
\end{prop}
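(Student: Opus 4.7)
My plan is to prove the equality of these two subfields of the Galois extension $L_n/L$ by showing that they correspond to the same subgroup of $G=\operatorname{Aut}(L_n/L)$. Since $L_n/L$ is Galois, each intermediate subfield is determined by its stabilizer, so it is enough to show that an automorphism $\theta\in G$ fixes $y$ if and only if it fixes every $M_i$. Write $H_y=\{\theta:\theta(y)=y\}$ and $H_M=\{\theta:\theta(M_i)=M_i\ \forall i\}$; the goal becomes $H_y=H_M$.

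Any $\theta\in G$ is encoded by a tuple $(\omega_1,\ldots,\omega_e)\in U_n^e$ via $\theta(x_i^{1/n})=\omega_i x_i^{1/n}$. Abbreviating $\omega^{\underline p}=\omega_1^{p_1}\cdots\omega_e^{p_e}$, the identities
\[
\theta(M_i)=\omega^{m_i}M_i,\qquad \theta(y)-y=\sum_{\underline p\in\operatorname{Supp}(y)} c_{\underline p}(\omega^{\underline p}-1)\,\underline x^{\underline p/n}
\]
reduce the equivalence to the numerical statement: $\omega^{m_i}=1$ for every $i$ $\Longleftrightarrow$ $\omega^{\underline p}=1$ for every $\underline p\in\operatorname{Supp}(y)$. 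The direction $(\Leftarrow)$ is immediate once I observe that each $m_i$ belongs to $\operatorname{Supp}(y)$: by Proposition \ref{27}, $m_i=O(y_k-y)$ for some $k$, and since $\operatorname{Supp}(y_k-y)\subseteq\operatorname{Supp}(y)$ (because $y_k=\theta(y)$ for some $\theta$), we get $m_i\in\operatorname{Supp}(y)$.

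The reverse direction $(\Rightarrow)$ is the substantive step. Suppose $\omega^{m_i}=1$ for all $i$ but $\theta(y)\neq y$. Then the set $\{\underline p\in\operatorname{Supp}(y):\omega^{\underline p}\neq 1\}$ is nonempty, and by Proposition \ref{compatibleorder} it has a unique minimum $\underline p_0$; this $\underline p_0$ is precisely $O(\theta(y)-y)$. Now $\theta(y)$ is a conjugate $y_k$ of $y$, so $\underline x^{\underline p_0/n}=LM(y_k-y)$ is a characteristic monomial by Proposition \ref{27}, i.e.\ $\underline p_0=m_j$ for some $j$. But then $\omega^{m_j}\neq 1$, contradicting the hypothesis. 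The main obstacle is exactly this step, and what makes it work is Proposition \ref{27}: without knowing that the leading exponent of every difference $\theta(y)-y$ is one of the $m_i$, one could not close the loop between the stabilizer of $y$ and the common stabilizer of the characteristic monomials.
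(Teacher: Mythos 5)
Your proof is correct and takes essentially the same route as the paper: one inclusion follows because each $m_i$ lies in $\mathrm{Supp}(y)$ (so any automorphism fixing $y$ fixes every $M_i$), and the other uses Proposition \ref{27} to identify $LM(\theta(y)-y)$ with a characteristic monomial, exactly as in the paper's argument via the Galois correspondence. Your explicit encoding of automorphisms by tuples of $n$-th roots of unity is just a more concrete phrasing of the same two steps.
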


\begin{proof}{.} Let $\theta\in Aut(L_{n}/L(y_1))$, then $\theta$ is an $L$-automorphism of $L_{n}$
 with $\theta(y_1)=y_1$. We have  $\theta(y_1)=\theta(\sum c_{p}\underline{x}^{\frac{p}{n}})=
 \sum c_{p}\theta(\underline{x}^{\frac{p}{n}})=
 \sum c_{p}k_{p}\underline{x}^{\frac{p}{n}}=y_1=\sum c_{p}\underline{x}^{\frac{p}{n}}$, with 
  $k_{p}\neq 0$ for  all $\frac{p}{n}\in {\rm Supp}(y_1),$ and so $ \theta(\underline{x}^{\frac{p}{n}})=\underline{x}^{\frac{p}{n}} $. Hence $\underline{x}^{\frac{p}{n}} \in L(y_1)$ for all  $\frac{p}{n} \in {\rm Supp}(y_1)$. In particular, since $M_{1},\cdots,M_{h}$ are monomials of $y_1$, then $M_{1},\cdots,M_{h}\in L(y_1)$, and so $L(M_{1},\cdots,M_{h})\subset L(y_1)$. Conversely, if $\theta\in Aut(L_{n}/L(M_{1},\cdots,M_{h}))$, i.e if $\theta$ is an $L$ automorphism of $L_{n}$ such that $\theta(M_{i})=M_{i}\ \forall\ i=1,\cdots,h $, then $\theta(y_1)=y_1$. In fact if $\theta(y_1) \neq y_1 $ then $\theta(y_1)-y_1=cM_{i}+\epsilon_{i}$ for some characteristic monomial $M_{i}$, hence $\theta(M_{i})\neq M_{i}$ which contradicts the hypothesis. This proves our assertion.$\blacksquare$
\end{proof}
\noindent Note that for all $i\in\{1,\cdots,h\}$,  $L(M_{1},\cdots,M_{i})=L[M_{1},\cdots,M_{i}]$ since $M_{i}$  is algebraic  over $L$.

\begin{prop}\label{belongstogroup}
Let the notations be as above. If $\frac{m}{n}\in {\rm Supp}(y_1)$ then  $m\in (n\mathbb{Z})^{e}+\sum_{i=1}^{h}m_{i}\mathbb{Z}$.
\end{prop}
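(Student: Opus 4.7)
The plan is to leverage Galois theory on the extension $L_n/L$ together with the previous proposition $L(y)=L(M_1,\ldots,M_h)$. First I would fix a primitive $n$-th root of unity $\zeta \in \mathbb{K}$ and identify the Galois group $\mathrm{Aut}(L_n/L)$ with the abelian group $(\mathbb{Z}/n\mathbb{Z})^e$: to $\underline{a}=(a_1,\ldots,a_e)$ corresponds the automorphism $\theta_{\underline{a}}$ with $\theta_{\underline{a}}(x_i^{1/n}) = \zeta^{a_i} x_i^{1/n}$. Under this identification, for every $\underline{p}\in\mathbb{Z}^e$ one has $\theta_{\underline{a}}(\underline{x}^{\underline{p}/n}) = \zeta^{\underline{a}\cdot\underline{p}}\,\underline{x}^{\underline{p}/n}$, where $\underline{a}\cdot\underline{p}$ denotes the standard dot product taken modulo $n$.

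Next I would compute the fixators in two different ways. On one hand, $\theta_{\underline{a}}$ fixes $M_i=\underline{x}^{m_i/n}$ for all $i$ iff $\underline{a}\cdot m_i\equiv 0\pmod n$ for all $i$, i.e.\ iff $\underline{a}$ annihilates the image $\overline{G}$ of $G:=(n\mathbb{Z})^e+\sum_{i=1}^h m_i\mathbb{Z}$ in $(\mathbb{Z}/n\mathbb{Z})^e$ with respect to the pairing $(\underline{a},\underline{b})\mapsto \underline{a}\cdot\underline{b}\bmod n$. On the other hand, since the monomials $\underline{x}^{\underline{p}/n}$ are $\mathbb{K}$-linearly independent, $\theta_{\underline{a}}(y)=y$ iff $\zeta^{\underline{a}\cdot\underline{p}}=1$ for every $\underline{p}\in\mathrm{Supp}(y)$, i.e.\ iff $\underline{a}\cdot\underline{p}\equiv 0\pmod n$ for all $\underline{p}\in\mathrm{Supp}(y)$ (note that $\mathrm{Supp}(y)\subseteq\mathbb{Z}^e$ by the definition of $\mathbb{K}_C[[\underline{x}^{1/n}]]$). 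By the previous proposition $L(y)=L(M_1,\ldots,M_h)$, so the two fixator subgroups coincide; call this common subgroup $H\subseteq(\mathbb{Z}/n\mathbb{Z})^e$.

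Now I would conclude by the duality in the finite abelian group $(\mathbb{Z}/n\mathbb{Z})^e$ equipped with the pairing above, which is nondegenerate (its Gram matrix is the identity). For any subgroup $S\subseteq(\mathbb{Z}/n\mathbb{Z})^e$ we have the standard identity $S^{\perp\perp}=S$. Applying this to $S=\overline{G}$, whose annihilator is exactly $H$ by the first computation above, gives $\overline{G} = H^\perp$. Fix now $m\in\mathbb{Z}^e\cap\mathrm{Supp}(y)$. By the second computation, every $\underline{a}\in H$ satisfies $\underline{a}\cdot m\equiv 0\pmod n$, so the class $\overline{m}\in(\mathbb{Z}/n\mathbb{Z})^e$ lies in $H^\perp=\overline{G}$. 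Lifting back to $\mathbb{Z}^e$ yields $m\in G=(n\mathbb{Z})^e+\sum_{i=1}^h m_i\mathbb{Z}$, which is the desired conclusion.

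The only delicate point is the perfect-duality step in $(\mathbb{Z}/n\mathbb{Z})^e$; everything else is bookkeeping. I would record it as a self-contained sublemma if the authors have not used it earlier, noting that it follows immediately from the fact that the character group $\widehat{(\mathbb{Z}/n\mathbb{Z})^e}$ is canonically isomorphic to $(\mathbb{Z}/n\mathbb{Z})^e$ via $\underline{a}\mapsto(\underline{b}\mapsto\zeta^{\underline{a}\cdot\underline{b}})$, so annihilators correspond to Pontryagin duals and $S^{\perp\perp}=S$ holds for every subgroup.
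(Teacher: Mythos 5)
Your argument is correct, but it follows a genuinely different route from the paper's. The paper's proof is "constructive at the level of series": it takes $M=\underline{x}^{m/n}$, uses the fact (established in the proof of the preceding proposition) that $M\in L(y)=L[M_1,\ldots,M_h]$, writes $M$ explicitly as an $L$-linear combination of monomials $M_1^{\alpha_1}\cdots M_h^{\alpha_h}$, clears denominators, and compares leading monomials of the two sides to extract a single additive relation $nb+m=na+\sum_j\alpha_j m_j$. You instead work entirely inside the Galois group $\mathrm{Aut}(L_n/L)\cong(\mathbb{Z}/n\mathbb{Z})^e$: you identify the fixator of $\{M_1,\ldots,M_h\}$ with the annihilator of $\overline{G}$ under the standard mod-$n$ pairing, use $L(y)=L(M_1,\ldots,M_h)$ (really only the inclusion $\mathrm{Aut}(L_n/L(M_1,\ldots,M_h))\subseteq\mathrm{Aut}(L_n/L(y))$, which is the direction both proofs need) to see that this fixator also kills every monomial of $y$, and conclude by the double-annihilator identity $S^{\perp\perp}=S$. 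Your version is the Kummer-theoretic statement that monomial subfields of $L_n/L$ correspond exactly to subgroups of $\frac{1}{n}\mathbb{Z}^e/\mathbb{Z}^e$; it is structurally cleaner, sidesteps the slightly delicate leading-monomial comparison (where one must argue that the leading monomial of the left-hand side occurs among the monomials of the right-hand side), and in fact yields directly the stronger equivalence $M\in F_i\Leftrightarrow m\in G_i$ recorded in Remark 1. The price is the perfect-duality sublemma on $(\mathbb{Z}/n\mathbb{Z})^e$, which is standard but not available in the paper as written, so you are right that it should be stated separately; the paper's computation avoids any such input at the cost of a more hands-on manipulation.
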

\begin{proof}{.} Write $M=\underline{x}^{\frac{m}{n}}$. Since $M$ is a  monomial of $y_1$, then $M\in L(y)=L[M_{1},\cdots,M_{h}] $, hence 
$M=\frac{f_{1}}{g_{1}}M_{1}^{\alpha^{1}_{1}}\cdots M_{h}^{\alpha_{h}^{1}}+\cdots +\frac{f_{l}}{g_{l}}M_{1}^{\alpha^{l}_{1}}
\cdots M_{h}^{\alpha_{h}^{l}}$
for some $f_{1},\cdots,f_{l},g_{1},\cdots,g_{l}\in\mathbb{K}_{C}[\![\underline{x}]\!]$
 and $l\in \mathbb{N}^{*}$, and so
$g_{1}\cdots g_{l}M=f_{1}g_{2}\cdots g_{l}M_{1}^{\alpha_{1}^{1}}\cdots M_{h}^{\alpha_{h}^{1}}+\cdots +f_{l}g_{1}\cdots g_{l-1}M_{1}^{\alpha_{1}^{l}}\cdots M_{h}^{\alpha_{h}^{l}}$.
Comparing both sides we get that $\underline{x}^{b}{M}=LM(g_{1}\cdots g_{l}M)=\underline{x}^{a}M_{1}^{\alpha_{1}^{i}}\cdots M_{h}^{\alpha_{h}^{i}}$ for some $i\in\{1,\cdots,l\}$ and $a,b\in\mathbb{Z}^{e}$. In particular $nb+m=na+\alpha_{1}^{i}m_{1}+...+\alpha_{h}^{i}m_{h}$, and so $m=n(a-b)+\alpha_{1}^{i}m_{1}+...+\alpha_{h}^{i}m_{h}\in(n\mathbb{Z})^{e}+\sum_{i=1}^{h}m_{i}\mathbb{Z}$.$\blacksquare$

\end{proof}

\begin{rem}Write $F_{0}=L$ and for all $i\in\lbrace 1,\cdots,h\rbrace,F_{i}=L[M_{1},\cdots,M_{i}]=F_{i-1}[M_{i}]$. Also let $G_{0}=(n\mathbb{Z})^{e}$ and for all $i\in \lbrace 1,\cdots,h\rbrace, G_{i}=(n\mathbb{Z})^{e}+\sum_{j=1}^{i}m_{j} \mathbb{Z}$. As in  Proposition $\ref{belongstogroup}$, we can prove that for any monomial $M=\underline{x}^{\frac{m}{n}}$ with $m\in C$, we have $M\in F_i\Leftrightarrow m\in G_{i}$.
\end{rem}

\noindent Next we will define the set of characteristic sequences associated with $f$.

\begin{defi}\label{mrd} Let the notations be as above  and let $\{m_{1},\cdots,m_{h}\}$ be the set of characteristic exponents of $f$. Let $I_e$ be the $e\times e$ identity matrix. We shall introduce  the following sequences:\\
$\bullet$ The $GCD$-sequence $\{D_{i}\}_{1\leq i\leq h+1}$, where $D_{1}=n^{e}$ and for all $i\in\{1,\cdots,h\}$,  $D_{i+1}=gcd(nI_{e},m_{1}^{T},.$\\$..,m_{i}^{T})$, the $gcd$ of the $ (e,e)$ minors of the $e\times (e+i)$ matrix $(nI_{e},m_{1}^{T},\cdots,m_{i}^{T})$.\\
$\bullet$ The $d$-sequence $\{d_{i}\}_{1\leq i\leq h+1}$, where $d_{i}=\frac{D_{i}}{D_{h+1}}$.\\
$\bullet$ The $e$-sequence $\{e_{i}\}_{1\leq i\leq h}$, where  $e_{i}=\frac{D_{i}}{D_{i+1}}=\frac{d_{i}}{d_{i+1}}$.\\
$\bullet$ The $r$-sequence $\{r_{0}^{1},\cdots,r_{0}^{e},r_{1},\cdots,r_{h}\}$, where $(r_{0}^{1},...r_{0}^{e})$ is the canonical basis of $(n\mathbb{Z})^{e}$, $r_{1}=m_{1}$, and for all $i\in\{2,\cdots,h\}$  $r_{i}=e_{i-1} r_{i-1}+m_{i}-m_{i-1}$. Note that for all $i\in\{2,\cdots,h\}$, $r_id_i=r_1d_1+\sum_{k=2}^i(m_k-m_{k-1})d_k=\sum_{k=1}^{i-1}(d_k-d_{k+1})m_k+m_id_i$.
\end{defi}

\begin{rem}\label{belongsornot}
Let the notations be as in Definition \ref{mrd} and let $v$ be a non zero vector  in $\mathbb{Z}^{e}$. Let $\tilde{D}$ be the $gcd$ of the $(e,e)$ minors of the matrix $(nI_{e},m_{1}^{T},\cdots,m_{i}^{T},v^{T})$, then  $v\in (n\mathbb{Z})^{e}+\sum_{j=1}^{i}m_{j}\mathbb{Z}$ if and only if $D_{i+1}=\tilde{D}$. More generally, $\frac{D_{i+1}}{\tilde{D}}v\in (n\mathbb{Z})^{e}+\sum_{j=1}^{i}m_{j}\mathbb{Z}$ and if $D_{i+1}>\tilde{D}$ then for all $1\leq k< \frac{D_{i+1}}{\tilde{D}}, kv\notin (n\mathbb{Z})^{e}+\sum_{j=1}^{i}m_{j}\mathbb{Z}$.
\end{rem}


\begin{prop}\label{fimi} For all $i=1,\cdots,h-1$ let $H_{i}=L(M={\underline{x}^{m\over n}}, \frac{m}{n}\in{\rm Supp}(y), m<m_{i+1})$. Then we have\\
$(i)$ $F_{i}=H_{i}$ and $m_{i}$ does not belong to $F_{i-1}$ \\ 
$(ii)$ $[F_{i}:F_{i-1}]$, the degree of extension of $F_{i}$ over $F_{i-1}$, is equal to $e_{i}$.   \end{prop}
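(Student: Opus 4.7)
The plan is to analyse both parts uniformly through the Galois action of $\mathrm{Gal}(L_n/L)$ on monomials. An element $\theta$ is determined by $\theta(x_j^{1/n})=\omega_j x_j^{1/n}$ with $\omega_j\in U_n$, so $\theta(\underline{x}^{\underline{p}/n})=\chi_\theta(\underline{p})\,\underline{x}^{\underline{p}/n}$, where $\chi_\theta(\underline{p})=\omega_1^{p_1}\cdots\omega_e^{p_e}$ is a character of $\mathbb{Z}^e$ of order dividing $n$. The key observation is that $\theta$ fixes the field $F_j$ iff $\chi_\theta(m_k)=1$ for all $k\le j$, iff $\chi_\theta$ vanishes on the subgroup $G_j$; combined with the remark following Proposition \ref{belongstogroup}, this identifies $\mathrm{Gal}(L_n/F_j)$ with $\mathrm{Hom}(\mathbb{Z}^e/G_j,U_n)$.

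For the inclusion $F_i\subseteq H_i$ in $(i)$, Proposition \ref{27} writes each $M_j$ as $LM(y_k-y)$, hence $m_j\in\mathrm{Supp}(y_k-y)\subseteq\mathrm{Supp}(y)$, and for $j\le i$ the inequality $m_j\le m_i<m_{i+1}$ places $M_j$ inside $H_i$. For the reverse inclusion I argue by contradiction: if $m\in\mathrm{Supp}(y)$ with $m<m_{i+1}$ and $\underline{x}^{m/n}\notin F_i$, then $m\notin G_i$, so there is $\theta\in\mathrm{Gal}(L_n/F_i)$ with $\chi_\theta(m)\neq 1$. Expanding $\theta(y)-y=\sum c_{\underline{p}}(\chi_\theta(\underline{p})-1)\underline{x}^{\underline{p}/n}$, the coefficients at $m_1,\ldots,m_i$ vanish while the coefficient at $m$ does not, so $O(\theta(y)-y)\le m<m_{i+1}$; but by Proposition \ref{27} this order is a characteristic exponent $m_l$ with $l\le i$, contradicting the vanishing of the coefficient at $m_l$.

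The assertion $m_i\notin F_{i-1}$, read as $M_i\notin F_{i-1}$, follows by the same mechanism. Pick $\theta_0$ with $O(\theta_0(y)-y)=m_i$; since $m_1,\ldots,m_{i-1}\in\mathrm{Supp}(y)$ are strictly less than $m_i$, the definition of order forces $\chi_{\theta_0}(m_k)=1$ for $k<i$, hence $\theta_0\in\mathrm{Gal}(L_n/F_{i-1})$. If $M_i$ were in $F_{i-1}$ we would also have $\chi_{\theta_0}(m_i)=1$, killing the coefficient at $m_i$ in $\theta_0(y)-y$ and contradicting $O(\theta_0(y)-y)=m_i$.

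For part $(ii)$, the character description gives $|\mathrm{Gal}(L_n/F_j)|=|\mathbb{Z}^e/G_j|$, and by the invariant factor theorem applied to the matrix $(nI_e,m_1^T,\ldots,m_j^T)$ this index equals the gcd of its $(e,e)$-minors, namely $D_{j+1}$. Since $L_n/L$ is Galois of degree $n^e$, we obtain $[F_j:L]=n^e/D_{j+1}$, and therefore $[F_i:F_{i-1}]=D_i/D_{i+1}=e_i$. The main obstacle I anticipate is the contradiction argument in $(i)$: one must use simultaneously the character-theoretic description of $\mathrm{Gal}(L_n/F_j)$ and the fact from Proposition \ref{27} that $O(\theta(y)-y)$ is a characteristic exponent, and play these two ingredients off each other in exactly the right way to exclude a support exponent below $m_{i+1}$ lying outside $G_i$.
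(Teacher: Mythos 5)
Your proof is correct. For part $(i)$, including the claim $M_i\notin F_{i-1}$, you take essentially the paper's route: the paper likewise observes that any $\theta\in Aut(L_n/F_i)$ fixes $M_1,\ldots,M_i$, hence (since $LM(\theta(y)-y)$ is a characteristic monomial by Proposition \ref{27}) satisfies $LM(\theta(y)-y)\geq M_{i+1}$ and therefore fixes every monomial of $y$ below $M_{i+1}$; your version runs the same computation in contrapositive form with the character $\chi_\theta$ written out explicitly, which is a harmless repackaging, and you correctly supply the small point the paper leaves implicit, namely that each $m_k$ lies in ${\rm Supp}(y)$ because ${\rm Supp}(y_j-y)\subseteq {\rm Supp}(y)$. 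Part $(ii)$ is where you genuinely diverge. The paper works on the field side: using Remark \ref{belongsornot} it shows $e_im_i\in G_{i-1}$ while $\alpha m_i\notin G_{i-1}$ for $0<\alpha<e_i$, deduces that the minimal polynomial of $M_i$ over $F_{i-1}$ is exactly $Y^{e_i}-\underline{x}^{e_i m_i/n}$, and reads off $[F_i:F_{i-1}]=e_i$. You instead work on the Galois side: identifying $Aut(L_n/F_j)$ with ${\rm Hom}(\mathbb{Z}^e/G_j,U_n)$, invoking duality for finite abelian groups of exponent dividing $n$ to get $\# Aut(L_n/F_j)=[\mathbb{Z}^e:G_j]$, and the Smith-normal-form (determinantal divisor) formula to get $[\mathbb{Z}^e:G_j]=D_{j+1}$, whence $[F_i:F_{i-1}]=D_i/D_{i+1}=e_i$. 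Both arguments are complete; yours delivers $[L_n:F_j]=D_{j+1}$ for all $j$ in one stroke, which is exactly the count the paper rederives later in Proposition \ref{sidi}, while the paper's Kummer-style argument has the side benefit of exhibiting the explicit minimal polynomial of $M_i$ over $F_{i-1}$ (not needed for the statement itself).
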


\begin{proof}{.} $(i)$ Since $m_{j}<m_{i+1}$ for all $j=1,\cdots,i$, then $m_{1},\cdots,m_{i}\in H_{i}$, and so $F_{i}\subseteq H_{i}$. In order to prove that $H_{i}\subseteq F_{i}$, consider a monomial $M$ of $y$  such that $M<M_{i+1}$. For each $\theta\in Aut(L_{n}/F_{i})$, $\theta$ is an $L$ automorphism of $L_{n}$ and $\theta(M_{j})=M_{j}$ for all $j<i+1$. Hence $LM(\theta(y)-y)\geq M_{i+1}$, and so $\theta(M)=M$ for all $M<M_{i+1}$, hence $M\in F_{i}$. Finally we get that $H_{i}=F_{i}$. Now to prove that $m_{i}\notin F_{i-1}$, let $\theta\in Aut(L_{n}/ L)$ such that $\theta(y)-y=cM_{i}+\varepsilon$ with $O(\varepsilon)>m_{i}$ and $c$ a non zero constant (such a $\theta$ obviously exists since $M_{i}$ is a characteristic monomial of $f$), then $\theta(M_{j})=M_{j}$ for all $j=1,\cdots,i-1$ and $\theta(M_{i})\neq M_{i}$, and so $\theta\in Aut(L_{n}/F_{i-1})$ with  $\theta(M_{i})\neq M_{i}$, hence $M_{i}$ does not belong to $F_{i-1}$.\\
$(ii)$ Since $M_{i}\notin F_{i-1}$, then $m_{i}\notin G_{i-1}$, and so $D_{i}>D_{i+1}$. Moreover $e_{i}m_{i}\in G_{i-1}$ and  for all $ 0< \alpha < e_{i}$ we have  $\alpha m_{i}\notin G_{i-1}$. Now let $g=y^{l}+a_{1}y^{l-1}+...+a_{l}$ be the minimal polynomial of $M_{i}$ over $F_{i-1}$ and suppose that $l<e_{i}$. Since $g(M_{i})=0$, then there exists some $k\in\{0,\cdots,l-1\}$ such that $\underline{x}^{l\frac{m_{i}}{n}}=\underline{x}^{\frac{\alpha}{n}}\underline{x}^{\frac{km_{i}}{n}}$ for some $\alpha\in G_{i-1}$, and so $(l-k)m_{i}=\alpha\in G_{i-1}$ with $0<l-k<e_{i}$ which is a contradiction. Hence $l\geq e_i$. But $g$ divides $Y^{e_{i}}-x^{e_{i}\cdot \frac{m_{i}}{n}}$. Hence $g=Y^{e_{i}}-x^{e_{i}\cdot \frac{m_{i}}{n}}$, and consequently $[F_{i}:F_{i-1}]=e_{i}$.$\blacksquare$

\end{proof}
\begin{prop}\label{er}
Let the notations be as above. For all $i\in\{1,\cdots,h\}$ we have $e_{i}r_{i}\in(n\mathbb{Z})^{e}+\sum_{j=1}^{i-1}r_{j}\mathbb{Z}$. Moreover, $\alpha r_{i}\notin (n\mathbb{Z})^{e}+\sum_{j=1}^{i-1}r_{j}\mathbb{Z}$ for all $1\leq \alpha<e_{i}$.
\end{prop}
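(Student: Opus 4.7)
The plan is to reduce everything to the analogous statement for the characteristic exponents $m_i$, which was essentially established in the proof of Proposition \ref{fimi}(ii) (namely $e_i m_i \in G_{i-1}$ and $\alpha m_i \notin G_{i-1}$ for $0<\alpha<e_i$). To do this, I need to identify the group $(n\mathbb{Z})^e+\sum_{j=1}^{i-1}r_j\mathbb{Z}$ appearing in the statement with the group $G_{i-1}=(n\mathbb{Z})^e+\sum_{j=1}^{i-1}m_j\mathbb{Z}$.

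First, I would prove by induction on $i\geq 0$ that
\[
(n\mathbb{Z})^e+\sum_{j=1}^{i}r_j\mathbb{Z}=(n\mathbb{Z})^e+\sum_{j=1}^{i}m_j\mathbb{Z}=G_i.
\]
The base case $i=0$ is trivial, and $r_1=m_1$ gives $i=1$. Assuming the equality for $i-1$, the recursion $r_i=e_{i-1}r_{i-1}+m_i-m_{i-1}$ together with $r_{i-1}\in G_{i-1}$ and $m_{i-1}\in G_{i-1}$ shows $r_i-m_i\in G_{i-1}$; consequently adding $r_i\mathbb{Z}$ or $m_i\mathbb{Z}$ to $G_{i-1}$ produces the same group, completing the induction.

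Next, as a direct consequence of this induction, $r_i\equiv m_i \pmod{G_{i-1}}$, so for every integer $\alpha$,
\[
\alpha r_i\in (n\mathbb{Z})^e+\sum_{j=1}^{i-1}r_j\mathbb{Z}=G_{i-1}\ \Longleftrightarrow\ \alpha m_i\in G_{i-1}.
\]
Applying the facts established in the proof of Proposition \ref{fimi}(ii) (or equivalently Remark \ref{belongsornot} with $v=m_i$, which gives $e_i=D_i/D_{i+1}$ as the smallest positive integer $\alpha$ with $\alpha m_i\in G_{i-1}$) we obtain both $e_ir_i\in G_{i-1}$ and $\alpha r_i\notin G_{i-1}$ for $1\leq \alpha<e_i$, which is exactly the claim.

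The only step requiring genuine care is the induction identifying the two groups: once that bookkeeping is done, the statement is just a translation of the minimality of $e_i$ for the exponents $m_i$. There is no real obstacle beyond being careful that $r_i-m_i$ lies in $G_{i-1}$, which uses both terms $e_{i-1}r_{i-1}$ and $-m_{i-1}$ of the defining recursion simultaneously.
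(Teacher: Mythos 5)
Your proof is correct and follows essentially the same route as the paper: both arguments establish $(n\mathbb{Z})^e+\sum_{j\leq i}r_j\mathbb{Z}=(n\mathbb{Z})^e+\sum_{j\leq i}m_j\mathbb{Z}$ and $r_i\equiv m_i \pmod{G_{i-1}}$, and then invoke Remark \ref{belongsornot} for $v=m_i$. The only cosmetic difference is that you run the induction directly from the defining recursion $r_i=e_{i-1}r_{i-1}+m_i-m_{i-1}$, whereas the paper first rewrites it as $r_i=m_i+\sum_{j=1}^{i-1}(e_j-1)r_j$; the substance is identical.
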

\begin{proof}{.} We can easily prove that $
    r_{i}
    =m_{i}+\sum_{j=1}^{i-1}(e_j-1)r_j
$ for all  $i\in \{2,\cdots,h\}$, hence  each of the sequences $(m_{k})_{1\leq k\leq h}$ and $(r_{k})_{1\leq k\leq h}$
 can be obtained from the other and $(n\mathbb{Z})^{e}+\sum_{j=1}^{i}r_{j}\mathbb{Z}=(n\mathbb{Z})^{e}+\sum_{j=1}^{i}m_{j}\mathbb{Z}$ for all $i\in\{1,\cdots,h\}$. In particular, for all $\alpha\in{\mathbb N}$, 
 $\alpha r_{i}\in(n\mathbb{Z})^{e}+\sum_{j=1}^{i-1}r_{j}\mathbb{Z}$ if and only if $\alpha m_{i}\in(n\mathbb{Z})^{e}+\sum_{j=1}^{i-1}m_{j}\mathbb{Z}$. Let $i\in\{1,\cdots,h\}$. By Remark \ref{belongsornot},  $e_{i}m_{i}=\frac{D_{i}}{D_{i+1}}m_{i}\in (n\mathbb{Z})^{e}+\sum_{j=1}^{i-1}m_{j}\mathbb{Z}$ and $\alpha m_{i}\notin (n\mathbb{Z})^{e}+\sum_{j=1}^{i-1}m_{j}\mathbb{Z}$ for all $1\leq \alpha< e_{i}$. Hence  $e_{i}r_{i}\in(n\mathbb{Z})^{e}+\sum_{j=1}^{i-1}r_{j}\mathbb{Z}$ and $\alpha r_{i}\notin(n\mathbb{Z})^{e}+\sum_{j=1}^{i-1}r_{j}\mathbb{Z}$ for all $1\leq \alpha<e_{i}. \blacksquare$
\end{proof}

\begin{rem}\label{ehdh}
Since $[L(y):L]=n$, then it follows from proposition $\ref{fimi}$ that $[L(y):L]=e_{1}\cdots e_{h}=\frac{D_{1}}{D_{h+1}}$. But $[L(y):L]=n$ and $D_{1}=n^{e}$, hence $D_{h+1}=n^{e-1}$. It follows that $d_{1}=n$ and $d_{h+1}=1$.
\end{rem}

\noindent For all $i\in \lbrace 1,\cdots,h\rbrace$, define the following sets
$Q(i)=\{\theta\in Aut(L_{n}/L) \mid  nO(y-\theta(y))<m_i\},
R(i)=\{\theta\in Aut(L_{n}/L) \mid nO(y-\theta(y))\geqslant m_i\}$ and $
S(i)=\{\theta\in Aut(L_{n}/L) \mid nO(y-\theta(y))=m_i\}$. With these notations we have the following:

\begin{prop}\label{sidi}  $\# R(i)=D_i$ and $\# S(i)=D_{i}-D_{i+1}$, where $\#$ stand for  the cardinality.
 \end{prop}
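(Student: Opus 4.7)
The plan is to translate the order conditions defining $R(i)$ and $S(i)$ into Galois-theoretic conditions on the tower $L=F_0\subset F_1\subset\cdots\subset F_h=L(y)\subset L_n$ supplied by Proposition \ref{fimi}, and then to count using $\#Aut(L_n/F)=[L_n:F]$.

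The central step will be the identification $R(i)=Aut(L_n/F_{i-1})$. For any $\theta\in Aut(L_n/L)$ each monomial of $y$ is rescaled by some $k_{\underline{p}}\in\mathbb{K}^*$, so
\[
y-\theta(y)=\sum_{\underline{p}\in{\rm Supp}(y)}c_{\underline{p}}(1-k_{\underline{p}})\underline{x}^{\underline{p}/n}.
\]
Hence, understanding the condition $O(y-\theta(y))\geq m_i$ to also cover the case $\theta(y)=y$, it is equivalent to saying that $\theta$ fixes every monomial $\underline{x}^{\underline{p}/n}$ of $y$ with $\underline{p}<m_i$. By Proposition \ref{fimi}(i), the field generated over $L$ by these monomials is exactly $F_{i-1}=H_{i-1}$, so this is the same as $\theta\in Aut(L_n/F_{i-1})$.

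Once this identification is in place, the first count is immediate: iterating Proposition \ref{fimi}(ii) along $F_0\subset F_1\subset\cdots\subset F_{i-1}$ gives
\[
[F_{i-1}:L]=\prod_{j=1}^{i-1}e_j=\prod_{j=1}^{i-1}\frac{D_j}{D_{j+1}}=\frac{D_1}{D_i}=\frac{n^e}{D_i},
\]
and consequently $\#R(i)=[L_n:F_{i-1}]=n^e/(n^e/D_i)=D_i$. For $S(i)$ I would observe that $\theta\in S(i)$ iff $\theta$ fixes $F_{i-1}$ but does not fix $M_i$; since $F_i=F_{i-1}[M_i]$ by Proposition \ref{fimi}, this means $\theta\in Aut(L_n/F_{i-1})\setminus Aut(L_n/F_i)$, whence $\#S(i)=[L_n:F_{i-1}]-[L_n:F_i]=D_i-D_{i+1}$. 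The boundary case $i=h$ uses $F_h=L(y)$, so $[L_n:F_h]=n^{e-1}=D_{h+1}$ by Remark \ref{ehdh}.

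The main obstacle is really the first step: the non-trivial content is Proposition \ref{fimi}(i), which turns the order-of-difference filtration on $Aut(L_n/L)$ into a filtration by Galois subgroups of a specific tower; the rest is routine Galois theory plus a telescoping of the $D_j$. A secondary subtlety is the treatment of automorphisms with $\theta(y)=y$: these formally give $O(y-\theta(y))=-\infty$ under the paper's convention, but they sit inside $Aut(L_n/F_h)$ and are precisely what is subtracted off in the $S(i)$ count for $i=h$, so the tower-based bookkeeping handles them uniformly.
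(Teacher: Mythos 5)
Your proof is correct and follows essentially the same route as the paper: identify $R(i)$ with $Aut(L_n/F_{i-1})$ using Proposition \ref{fimi}, compute $\#R(i)=[L_n:F_{i-1}]=D_i$ from the tower of degrees, and obtain $S(i)$ as a set difference. Your explicit handling of the automorphisms with $\theta(y)=y$ and of the boundary case $i=h$ (where the paper's $R(h+1)$ is not literally defined) is, if anything, slightly more careful than the paper's own argument.
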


\begin{proof}{.} We have $\theta\in R(i) \Leftrightarrow \theta(M_j)=M_j$ for all $j<i \Leftrightarrow \theta\in Aut(L_{n}/L(M_{1},\cdots,M_{i-1}))$,
hence $\# R(i)=\# Aut(L_{n}/L(M_{1},\cdots,M_{i-1}))=[L_{n}:L(M_{1},\cdots,M_{i-1})]=[L_{n}:F_{i-1}]$. By proposition $\ref{fimi}$ we have $
[F_{i-1}:L]=[F_{i-1}:F_{i-2}]\cdots [F_{1}:L]=e_{i-1}\cdots e_{1}
=\frac{D_{1}}{D_{i}}=\frac{n^{e}}{D_{i}}
$. But  $[L_{n}:L]=[L_{n}:F_{i-1}] [F_{i-1}:L]=n^{e}$, then $[L_{n}:F_{i-1}]=D_{i}$, and so $\# R(i)=D_{i}$. Now $R(i+1)\subset R(i)$ and  $\theta\in S(i)$ if and only if $nO(y-\theta(y))= m_i$ if and only if $\theta\in R(i)$ and $\theta\notin R(i+1)$, hence $\#S(i)=D_{i}-D_{i+1}$. $\blacksquare$
\end{proof}

\noindent Similarly to Proposition \ref{sidi} we get  the following: for all $i\in\lbrace 1,\cdots,h\rbrace$, let $\tilde{R}(i)=\lbrace y_k \mid nO(y-y_k)\geq m_i \rbrace$ and $\tilde{S}(i)=\lbrace y_k \mid nO(y-y_k)=m_i\rbrace$. We have: 

\begin{prop}\label {sidi1} $\# \tilde{R}(i)=d_i$ and $\# \tilde{S}(i)=d_{i}-d_{i+1}$.
\end{prop}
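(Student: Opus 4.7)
The plan is to deduce Proposition \ref{sidi1} from Proposition \ref{sidi} by pushing forward the count along the orbit map $\theta\mapsto \theta(y)$ from $\mathrm{Aut}(L_n/L)$ to $\mathrm{Roots}(f)$.

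First I would set up this map. Since $f$ is irreducible in $L[y]$ (it is even irreducible in $\mathbb{K}_C[[\underline{x}]][y]$), the Galois group $G=\mathrm{Aut}(L_n/L)$ acts transitively on $\mathrm{Roots}(f)$, so $\theta\mapsto\theta(y)$ is a surjection $G\twoheadrightarrow \mathrm{Roots}(f)$. Its fiber over $y_1=y$ is the stabilizer $\mathrm{Aut}(L_n/L(y))$, and all other fibers are cosets of this stabilizer, hence have the same cardinality $[L_n:L(y)]=n^e/n=n^{e-1}=D_{h+1}$ (using Remark \ref{ehdh}). Thus every fiber has size $D_{h+1}$.

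Next I would observe that the conditions defining $R(i)$ and $S(i)$ depend on $\theta$ only through $\theta(y)$: indeed $O(y-\theta(y))$ is a function of the root $\theta(y)$ alone. Consequently $R(i)$ is the full preimage of $\tilde R(i)$ under $\theta\mapsto\theta(y)$, and $S(i)$ is the full preimage of $\bar S(i)=\tilde S(i)$. Since each fiber has cardinality $D_{h+1}$, we get
\begin{equation*}
\#R(i)=D_{h+1}\cdot\#\tilde R(i),\qquad \#S(i)=D_{h+1}\cdot\#\tilde S(i).
\end{equation*}

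Combining this with Proposition \ref{sidi}, which gives $\#R(i)=D_i$ and $\#S(i)=D_i-D_{i+1}$, and with the definition $d_j=D_j/D_{h+1}$ from Definition \ref{mrd}, I would conclude
\begin{equation*}
\#\tilde R(i)=\frac{D_i}{D_{h+1}}=d_i,\qquad \#\tilde S(i)=\frac{D_i-D_{i+1}}{D_{h+1}}=d_i-d_{i+1}.
\end{equation*}
There is no real obstacle here; the only point one needs to be careful about is justifying that every fiber of $\theta\mapsto\theta(y)$ has the same size $n^{e-1}$, which is immediate from the transitivity of the Galois action on the roots of the irreducible polynomial $f$ together with the orbit–stabilizer formula. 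Everything else is bookkeeping.
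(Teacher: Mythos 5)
Your proof is correct and is exactly the argument the paper intends (the paper gives no separate proof, asserting that Proposition \ref{sidi} and its proof imply the statement): the map $\theta\mapsto\theta(y)$ has all fibers of size $[L_n:L(y)]=n^{e-1}=D_{h+1}$, and dividing the counts of Proposition \ref{sidi} by $D_{h+1}$ gives $d_i$ and $d_i-d_{i+1}$. No issues.
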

\subsection{Pseudo roots, semigroup, and approximate roots of a free polynomial}
Let the notations be as above. For all $i\in\{1,\cdots,h\}$ we will define a specific free polynomial $G_{i}$, called the $i^{th}$ pseudo root of $f$ such that $O(G_{i}(\underline{x},y(\underline{x})))=\frac{r_{i}}{n}$. Also we will define the semigroup $\Gamma(f)$ of $f$ and we will construct a system of generators of $\Gamma(f)$. Finally we will prove that  $O(f,{\rm App}(f,d_i))=r_i$   for all $i\in\lbrace 1,\cdots,h\rbrace$ (see Definition 6 below). Let $y(x)=\sum c_{p}\underline{x}^{\frac{p}{n}}$ be a root of $f$ and let $\frac{\underline{m}}{n} \in{\rm Supp}(y)$. We set $y_{<m}=\sum_{p<m}c_{p}\underline{x}^{\frac{p}{n}}$ and we call $y_{<m}$ the $m$-truncation of $y$.

\begin{defi} Let the notations be as above. Given $g\in{\mathbb K}_C[\![\underline{x}]\!][y], f\not |g$, we set $O(f,g)=\sum_{i=1}^nO(g(\underline{x},y_i))$\\$=nO(g(\underline{x},y(\underline{x})))$. Clearly $O(f,g_1g_2)=O(f,g_1)+O(f,g_2)$. It follows that $\Gamma(f)=\lbrace O(f,g)|g\in {\mathbb K}_C[\![\underline{x}]\!][y]\setminus(f)\rbrace$ is a semigroup. We call it the semigroup associated with $f$.
\end{defi}

\noindent In the following we will  prove that $(r_0^1,\cdots,r_0^e,r_1,\cdots,r_h)$ is a system of generators of $\Gamma(f)$. This will be done by using a set of polynomials called pseudo roots of $f$.

\begin{defi} For all $i\in\lbrace 1,\cdots,h\rbrace$, we define the $i^{th}$ pseudo root of $f$ to be the minimal polynomial of $y_{<m_{i}}$  over $L$. We denote it by $G_i$.
\end{defi}

\noindent In the following we shall study the properties of $G_i$. In particular we shall prove that $O(f,G_i)=r_i$.
\begin{prop}
Let the notations be as above. For all $i=1,\cdots,h, {\rm\ deg}_{y}(G_{i})=\frac{n^{e}}{D_{i}}=\frac{n}{d_{i}}$.
\end{prop}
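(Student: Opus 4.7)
The plan is to identify $L(y_{<m_i})$ with $F_{i-1}$ by a Galois-theoretic argument, and then to read off $\deg_y G_i$ from Proposition \ref{fimi}(ii) and Remark \ref{ehdh}.

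First I would verify that $y_{<m_i}$ actually lies in $L_n$: grouping the power series $y$ according to the residue class of its exponents modulo $n$ exhibits $y$, and hence its truncation $y_{<m_i}$, as a sum $\sum_{0\le r_j<n} q_{\underline{r}}(\underline{x})\,\underline{x}^{\underline{r}/n}$ with $q_{\underline{r}}\in\mathbb{K}_{C}[[\underline{x}]]\subset L$. Thus $y_{<m_i}\in L_n$, and since $L_n/L$ is Galois of degree $n^e$, one has
\[\deg_y G_i=[L(y_{<m_i}):L]=n^{e}/\#{\rm Stab}(y_{<m_i}),\]
where ${\rm Stab}(y_{<m_i})\subseteq {\rm Aut}(L_n/L)$ is the pointwise stabilizer.

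The main computation is then the identification ${\rm Stab}(y_{<m_i})=R(i)$. For $\theta\in {\rm Aut}(L_n/L)$ one has $\theta(\underline{x}^{\underline{p}/n})=k_{\underline{p}}\,\underline{x}^{\underline{p}/n}$ with $k_{\underline{p}}\in\mathbb{K}^{*}$, so the condition $\theta(y_{<m_i})=y_{<m_i}$ forces, by $\mathbb{K}$-linear independence of the distinct monomials $\underline{x}^{\underline{p}/n}$, that $k_{\underline{p}}=1$ for every $\underline{p}\in {\rm Supp}(y)$ with $\underline{p}<m_i$. Equivalently, $\theta$ fixes each monomial of $y$ strictly smaller than $M_i$; by Proposition \ref{fimi}(i) this is exactly the condition $\theta\in {\rm Aut}(L_n/F_{i-1})=R(i)$. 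Proposition \ref{sidi} then gives $\#R(i)=D_i$, whence $\deg_y G_i=n^{e}/D_i$.

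Finally, Remark \ref{ehdh} yields $D_{h+1}=n^{e-1}$ and Definition \ref{mrd} gives $d_i=D_i/D_{h+1}$, so $n^{e}/D_i=n^{e}/(d_i\,n^{e-1})=n/d_i$, as asserted. The one delicate point I anticipate is the verification that $y_{<m_i}\in L_n$, so that the standard Galois theory of the finite extension $L_n/L$ may be applied to the element $y_{<m_i}$ rather than to some completion; beyond this, the argument is essentially bookkeeping that combines Proposition \ref{fimi}, Proposition \ref{sidi}, and Remark \ref{ehdh}.
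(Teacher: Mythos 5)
Your argument is correct and is essentially the paper's: the paper's two-line proof cites Proposition \ref{fimi} to get $L(y_{<m_i})=L(M_1,\ldots,M_{i-1})=F_{i-1}$ and reads off $\deg_yG_i=[F_{i-1}:L]=e_1\cdots e_{i-1}=n^e/D_i=n/d_i$, which is exactly your stabilizer computation, since ${\rm Stab}(y_{<m_i})={\rm Aut}(L_n/F_{i-1})=R(i)$ and $\#R(i)=D_i$ is Proposition \ref{sidi}. One pedantic remark on your side verification that $y_{<m_i}\in L_n$ (a point the paper takes for granted): for a general line free cone the grouped coefficient series $q_{\underline r}$ need not lie in $\mathbb{K}_C[[\underline x]]$, because the shifted exponents $(\underline p-\underline r)/n$ can leave $C$; choosing each representative $\underline r$ inside $n\,{\rm Supp}(y)$ and multiplying $q_{\underline r}$ by a suitable monomial of $\mathbb{K}_C[[\underline x]]$ shows $q_{\underline r}\in L$, which is all you need.
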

\begin{proof}{.} By proposition $\ref{fimi}$ we have $L(y_{<m_{i}})=L(M_{1},..,M_{i-1})$. In particular $deg_{y}(G_{i})=[L(y_{<m_{i}}):L]=[L(M_{1},\cdots,M_{i-1}):L]=\frac{n^{e}}{D_{i}}=\frac{n}{d_i}.\blacksquare$
\end{proof}

\begin{prop}\label{Gicharacteristic} The polynomial  $G_i$ is free, and its characteristic exponents are $\frac{m_{1}}{d_{i}},\cdots,\frac{m_{i-1}}{d_{i}}$. 
\end{prop}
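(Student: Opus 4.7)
The plan is to decompose \emph{free} into its two ingredients: the existence of a root of $G_i$ in $\mathbb{K}_C[[\underline{x}^{1/(n/d_i)}]]$, and the irreducibility of $G_i$ in $\mathbb{K}_C[[\underline{x}]][y]$. The natural candidate for the root is $y_{<m_i}$ itself, which by definition is a root of $G_i$ and which (by the previous proposition) has the correct number of conjugates over $L$, namely $n/d_i=\deg_y G_i$. So the content of the statement is really (a) that the exponents appearing in $y_{<m_i}$ can be rescaled by $n/d_i$ (not merely by $n$), (b) that $G_i$ has coefficients in $\mathbb{K}_C[[\underline{x}]]$, and (c) that the differences among the conjugates realize exactly the exponents $m_1/d_i,\ldots,m_{i-1}/d_i$.

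For (a), the key arithmetic fact is the inclusion $G_{i-1}\subseteq d_i\mathbb{Z}^e$. To see this, I would exhibit two families of $(e,e)$-minors of $(nI_e,m_1^T,\ldots,m_{i-1}^T)$: the minor $\det(nI_e)=n^e$, and the minor obtained by deleting the $k$-th column of $nI_e$ and appending $m_j^T$, which a straightforward cofactor expansion shows equals $\pm n^{e-1}(m_j)_k$. Since $D_i$ divides every $(e,e)$-minor, $d_i=D_i/n^{e-1}$ divides both $n$ and each coordinate $(m_j)_k$ for $1\leq j\leq i-1$. Combining this with Proposition \ref{fimi}(i) together with Remark 1 (which forces ${\rm Supp}(y_{<m_i})\subseteq G_{i-1}$), one concludes $y_{<m_i}\in\mathbb{K}_C[[\underline{x}^{1/(n/d_i)}]]$.

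For (b), every conjugate $\theta(y_{<m_i})$ lies in the same ring $\mathbb{K}_C[[\underline{x}^{1/(n/d_i)}]]$ (Galois action scales monomials by constants), so the coefficients of $G_i=\prod(y-\theta(y_{<m_i}))$ are symmetric functions lying in that ring and simultaneously invariant under the full group $\mathrm{Aut}(L_n/L)$, hence invariant under the subgroup fixing $\mathbb{K}_C[[\underline{x}]]$ inside $\mathbb{K}_C[[\underline{x}^{1/(n/d_i)}]]$; a Kummer-type invariance argument (monomials with non-integer exponents cannot survive) forces the coefficients into $\mathbb{K}_C[[\underline{x}]]$. Since $G_i$ is monic and coincides with the minimal polynomial of $y_{<m_i}$ over $L$, it is irreducible in $L[y]$; the monic factorization argument then transfers irreducibility to $\mathbb{K}_C[[\underline{x}]][y]$.

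For (c), I would observe that truncation commutes with Galois: $\theta(y_{<m_i})-y_{<m_i}=(\theta(y)-y)_{<m_i}$, so whenever $\theta\in S(j)$ with $j<i$ one has $O(\theta(y)-y)=m_j<m_i$ and consequently $O(\theta(y_{<m_i})-y_{<m_i})=m_j$ (in the $\underline{x}^{1/n}$-scale). Proposition \ref{sidi} guarantees $S(j)\neq\emptyset$ for every $j\leq h$, so all of $m_1,\ldots,m_{i-1}$ are attained; for $\theta\in R(i)$ no contribution occurs since $\theta(y_{<m_i})=y_{<m_i}$. Rewriting the exponents in the natural scale $\underline{x}^{1/(n/d_i)}$ of $G_i$ turns $m_j$ into $m_j/d_i\in\mathbb{Z}^e$ (an integer vector exactly by (a)), giving the announced list. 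The main obstacle I anticipate is the bookkeeping in (b): making rigorous the statement that the coefficients of $G_i$ are actually in $\mathbb{K}_C[[\underline{x}]]$ rather than in the fraction field $L$, which requires a Kummer-type invariance/normality statement for cone power series rings that the excerpt has not singled out as a standalone lemma.
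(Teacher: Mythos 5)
Your proposal is correct and follows essentially the same route as the paper: the divisibility of the coordinates of any $\lambda\in G_{i-1}$ by $d_i$ via the $(e,e)$-minors equal to $n^{e-1}\lambda_l$, and the identification of the characteristic exponents by rescaling $\frac{m_j}{n}=\frac{m_j/d_i}{n/d_i}$ after observing that truncation commutes with the Galois action. Your step (b) is in fact more careful than the paper, which simply declares $G_i$ ``free from the definition'' without verifying that the coefficients of the minimal polynomial of $y_{<m_i}$ lie in $\mathbb{K}_{C}[[\underline{x}]]$; the Galois-invariance argument you sketch (a monomial $\underline{x}^{\underline{p}/n}$ fixed by all of $\mathrm{Aut}(L_n/L)$ must have $\underline{p}\in(n\mathbb{Z})^e$) is the right way to close that gap.
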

\begin{proof}{.} The polynomial $G_i$ is free from the definition. We shall prove that $y_{<m_{i}}\in\mathbb{K}_{C}[\![\underline{x}^{\frac{1}{\frac{n}{d_{i}}}}]\!]$. Let $\underline{x}^{\frac{\lambda}{n}}$ be a monomial of $y_{<m_{i}}$, then $\lambda\in(n\mathbb{Z})^{e}+\sum_{j=1}^{i-1}m_{j}\mathbb{Z}$. Let $D$ be the $gcd$ of the minors of the matrix $(m_{0}^{1},\cdots,m_{0}^{e},m_{1},\cdots,m_{i-1},\lambda)$, then $D=D_{i}$. For all $l\in\{1,\cdots,e\}$ the matrix $A_{l}=(m_{0}^{1},\cdots,m_{0}^{l-1},\lambda,m_{0}^{l+1},\cdots,m_{0}^e)$  is one of the minors of the matrix $(m_{0}^{1},\cdots,m_{0}^{e},m_{1},\cdots,m_{i-1})$, then $D_{i}$ divides $Det(A_{l})$. Write $\lambda=(\lambda_{1},\cdots,\lambda_{e})$, then obviously $Det(A_{l})=n^{e-1}\lambda_{l}$, and so $D_{i}$ divides $n^{e-1}\lambda_{l}$ for all $l\in\{1,\cdots,e\}$. It follows that  $\frac{n^{e-1}\lambda}{D_{i}}=\frac{\lambda}{d_{i}}\in\mathbb{Z}^{e}$. Moreover, since $\lambda\in C$, and $\frac{1}{d_{i}}\geq 0$, then $\frac{\lambda}{d_{i}}\in C$. Hence $\underline{x}^{\frac{\lambda}{n}}=\underline{x}^{\frac{\lambda'}{\frac{n}{d_{i}}}}$ where $\lambda'=\frac{\lambda}{d_{i}}$, and so $\underline{x}^{\frac{\lambda}{n}}\in\mathbb{K}_{C}[\![\underline{x}^{\frac{1}{\frac{n}{d_{i}}}}]\!]$.\\
Let $\theta(y_{<m_{i}})$ be a conjugate of $y_{<m_{i}}$, then obviously $LM(\theta(y_{<m_{i}})-y_{<m_{i}})=\underline{x}^{\frac{m_{j}}{n}}$ for some $j\in\{1,\cdots,i-1\}$. But $\frac{m_{j}}{n}=\frac{\frac{m_{j}}{d_{i}}}{\frac{n}{d_{i}}}$, hence the set of  characteristic exponents of $G_i$ is $\{\frac{m_{1}}{d_{i}},\cdots,\frac{m_{i-1}}{d_{i}}\}$.$\blacksquare$

\end{proof}

\begin{prop}\label{qqofi} Let the notations be as above. For all $i\in\lbrace 1,\cdots,h\rbrace$, we have $O(f(\underline{x},y_{<m_i}(\underline{x})))=\dfrac{r_id_i}{n}$.

 \end{prop}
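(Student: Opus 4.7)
The plan is to factor $f(\underline{x},y)=\prod_{k=1}^{n}(y-y_{k})$ over its roots $y_{1}=y,y_{2},\ldots,y_{n}$ and exploit additivity of the order on $\mathbb{K}_{C}[[\underline{x}^{1/n}]]$, so that
\[
O(f(\underline{x},y_{<m_{i}}(\underline{x})))=\sum_{k=1}^{n}O(y_{<m_{i}}-y_{k}).
\]
It then suffices to determine each summand and sum up using the cardinality counts of Proposition~\ref{sidi1}.

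For each $k$, I would decompose $y_{<m_{i}}-y_{k}=(y_{<m_{i}}-y)+(y-y_{k})$. The first summand has leading monomial $-c_{m_{i}}\underline{x}^{m_{i}/n}$, nonzero because $m_{i}/n\in{\rm Supp}(y)$ (the coefficient $c_{m_{i}}$ is precisely the one whose action by a well chosen $\theta$ produces the characteristic exponent $m_{i}$). By Proposition~\ref{27}, the second summand has order either $+\infty$ (if $y_{k}=y$) or $m_{j}/n$ for some $j\in\{1,\ldots,h\}$. Three subcases then arise: (a) if $j<i$, the second piece strictly dominates and $O(y_{<m_{i}}-y_{k})=m_{j}/n$; (b) if $j>i$ or $y_{k}=y$, the first piece dominates and $O(y_{<m_{i}}-y_{k})=m_{i}/n$; (c) if $j=i$, writing $y_{k}=\theta(y)$ for some $\theta\in\mathrm{Aut}(L_{n}/L)$ which fixes $M_{1},\ldots,M_{i-1}$ and sends $M_{i}$ to $k_{m_{i}}M_{i}$ with $k_{m_{i}}\neq 1$ (this description of the Galois action comes from Proposition~\ref{fimi}), the coefficient of $\underline{x}^{m_{i}/n}$ in $y-y_{k}$ equals $c_{m_{i}}(1-k_{m_{i}})$, and adding the $-c_{m_{i}}$ contribution of the first piece gives total coefficient $-c_{m_{i}}k_{m_{i}}\neq 0$, so again $O(y_{<m_{i}}-y_{k})=m_{i}/n$.

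Cases (b) and (c) together cover exactly $\tilde{R}(i)$, of size $d_{i}$, while case (a) for each $j<i$ covers $\tilde{S}(j)$, of size $d_{j}-d_{j+1}$; by Proposition~\ref{sidi1} this yields
\[
O(f(\underline{x},y_{<m_{i}}))=d_{i}\frac{m_{i}}{n}+\sum_{j=1}^{i-1}(d_{j}-d_{j+1})\frac{m_{j}}{n}=\frac{1}{n}\bigl(d_{i}m_{i}+\sum_{j=1}^{i-1}(d_{j}-d_{j+1})m_{j}\bigr),
\]
and the identity $r_{i}d_{i}=\sum_{k=1}^{i-1}(d_{k}-d_{k+1})m_{k}+m_{i}d_{i}$ recorded in Definition~\ref{mrd} closes the computation.

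The main obstacle I anticipate is case~(c): one has to confirm that the two contributions at exponent $m_{i}/n$ do not cancel. This is where the Galois-theoretic description of the characteristic monomial $M_{i}$ is essential, since it pins down the coefficient coming from the conjugate side as $c_{m_{i}}(1-k_{m_{i}})$ with the root of unity $k_{m_{i}}$ necessarily different from $1$. All other cases reduce to the standard ``lower-order dominates'' argument.
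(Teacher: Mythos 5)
Your proof is correct and follows essentially the same route as the paper: factor $f(\underline{x},y_{<m_i})=\prod_{k=1}^n(y_{<m_i}-y_k)$, show $O(y_{<m_i}-y_k)$ equals $m_j/n$ when $O(y-y_k)=m_j/n<m_i/n$ and equals $m_i/n$ otherwise, then sum using the cardinalities $d_i$ and $d_j-d_{j+1}$ from Proposition~\ref{sidi1} and the identity $r_id_i=\sum_{k<i}(d_k-d_{k+1})m_k+m_id_i$. The only difference is that you carefully verify the non-cancellation in your case (c) (leading coefficient $-c_{m_i}k_{m_i}\neq 0$), a point the paper dispatches with ``clearly''; that verification is sound since $m_i\in\mathrm{Supp}(y)$ and $k_{m_i}$ is a nonzero root of unity.
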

\begin{proof}{.} We have $f(\underline{x},y_{<m_i})=\prod_{k=1}^n(y_{<m_i}-y_k)$ with the assumption that $y=y_1$. Clearly $O(y_{<m_i}-y_k)=O(y_1-y_k)$ if $O(y_1-y_k)<\frac{m_i}{n}$ and $\frac{m_i}{n}$ otherwise. It follows from Proposition \ref{sidi1} that $O(\prod_{k=1}^n(y_{<m_i}-y_k)=\frac{1}{n}(\sum_{k=1}^{i-1}(d_k-d_{k+1})m_k+d_im_i)$, which is equal to $\dfrac{r_id_i}{n}$ by Definition \ref{mrd}.$\blacksquare$
\end{proof}

\noindent Let $g=y^m+b_1(\underline{x})y^{m-1}+\cdots+b_m(\underline{x})$ be a free polynomial of ${\mathbb K}_C[\![\underline{x}]\!][y]$ and let $ z_1,\cdots,z_m$ be the set of roots of $g$ in $\mathbb{K}[\![\underline{x}^{\frac{1}{m}}]\!]$. We set $O(f,g)=\sum_{i=1}^nO(g(\underline{x}, y_i(\underline{x})))$. Clearly $O(f,g)=\sum_{j=1}^mO(f(\underline{x}, z_j(\underline{x})))=O(g,f)=O({\rm Res}_y(f,g))$, where Res stand for the $y$-resultant of $f,g$. As a corollary of Proposition \ref{qqofi} we get the following:

\begin{cor}\label{ordps} With the notations above, we  have $O(f,G_i)=r_i$

\end{cor}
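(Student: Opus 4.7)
The plan is to combine Proposition \ref{qqofi} with the symmetric form of $O(f,g)$ recalled just before the corollary. Specifically, since $G_i$ has degree $n/d_i$ in $y$, I would write
$$O(f,G_i) = \sum_{j=1}^{n/d_i} O\bigl(f(\underline{x},z_j(\underline{x}))\bigr),$$
where $z_1,\ldots,z_{n/d_i}$ are the roots of $G_i$. Since $G_i$ is the minimal polynomial of $y_{<m_i}$ over $L$, these roots are exactly the Galois conjugates $\theta(y_{<m_i})$ of $y_{<m_i}$ as $\theta$ ranges over a transversal of $\mathrm{Aut}(L_n/L(y_{<m_i}))$ in $\mathrm{Aut}(L_n/L)$.

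The key observation I would establish next is that each term in the sum equals $O(f(\underline{x},y_{<m_i}(\underline{x})))$. Indeed, since $f \in L[y]$, we have $f(\underline{x},\theta(y_{<m_i})) = \theta(f(\underline{x},y_{<m_i}))$, and any $\theta \in \mathrm{Aut}(L_n/L)$ acts on monomials $\underline{x}^{\underline{p}/n}$ just by multiplication by a nonzero constant (a product of powers of $n$-th roots of unity). In particular, such $\theta$ preserves the support of a Puiseux series, and hence preserves its order with respect to $\leq$. So every summand equals $O(f(\underline{x},y_{<m_i}(\underline{x})))$.

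Putting this together with Proposition \ref{qqofi}, which tells us that $O(f(\underline{x},y_{<m_i}(\underline{x}))) = \tfrac{r_i d_i}{n}$, I would conclude
$$O(f,G_i) \;=\; \frac{n}{d_i}\cdot \frac{r_i d_i}{n} \;=\; r_i,$$
which is the required identity.

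I do not expect a serious obstacle here: the content is already packaged in Proposition \ref{qqofi}, and the only thing to check carefully is the Galois-invariance of the order, which follows immediately from the description of $\mathrm{Aut}(L_n/L)$ acting by roots of unity on the fractional monomials $\underline{x}^{\underline{p}/n}$. The only minor point is to make sure that the $z_j$ are well-defined in $\mathbb{K}_C[[\underline{x}^{1/(n/d_i)}]]$, but this is exactly what Proposition \ref{Gicharacteristic} guarantees for $G_i$ (freeness), so the definition of $O(f,G_i)$ in terms of its roots is legitimate.
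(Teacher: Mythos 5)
Your argument is correct and is exactly the paper's proof, which compresses it into the single line $O(f,G_i)=O(G_i,f)=\frac{n}{d_i}O(f(\underline{x},y_{<m_i}))=r_i$; you have merely made explicit the Galois-invariance of the order that justifies replacing the sum over the conjugates of $y_{<m_i}$ by $\frac{n}{d_i}$ copies of a single term. No changes needed.
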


\begin{proof}{.} In fact, $O(f,G_i)=O(G_i,f)=\frac{n}{d_i}O(f(\underline{x},y_{<m_i}))=r_i$.$\blacksquare$

\end{proof}

\noindent As a corollary we get the following:

\begin{prop}\label{GiGj}
 Let 
 $\{G_{1},\cdots,G_{h}\}$ be the set of pseudo roots of $f$. Let $i\in\{1,\cdots,h\}$, then we have $O(G_{i},G_{j})=\frac{r_{j}}{d_{i}}$ for all $j\in\{1,\cdots,i-1\}$.
\end{prop}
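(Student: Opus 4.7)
The plan is to deduce this from Corollary \ref{ordps} by applying it with $G_i$ playing the role of $f$, once we have identified $G_j$ as the $j$-th pseudo root of $G_i$ and tracked how the $r$-sequence changes under this re-indexing.

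First I would show that $G_j$ is the $j$-th pseudo root of $G_i$. By definition, $G_i$ is free and has $y_{<m_i}$ as a root, and by Proposition \ref{Gicharacteristic} its characteristic exponents (as a polynomial of degree $n/d_i$) are $m_k/d_i$ for $k=1,\ldots,i-1$, with the natural identification $\underline{x}^{\lambda/n}=\underline{x}^{(\lambda/d_i)/(n/d_i)}$. Hence the $j$-th pseudo root of $G_i$ is the minimal polynomial over $L$ of the truncation of $y_{<m_i}$ at its $j$-th characteristic monomial $\underline{x}^{m_j/n}$. Since $m_j<m_i$, that truncation is exactly $y_{<m_j}$, whose minimal polynomial over $L$ is $G_j$ by definition.

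Next I would relate the numerical invariants of $G_i$ to those of $f$. Writing $D_k^{(i)}, d_k^{(i)}, e_k^{(i)}, r_k^{(i)}$ for the sequences attached to $G_i$, I observe that replacing $n$ by $n/d_i$ and each $m_k$ by $m_k/d_i$ multiplies every $e\times e$ minor of the defining matrix by $1/d_i^e$, so $D_k^{(i)}=D_k/d_i^e$. In particular using $D_i=d_i n^{e-1}$ one gets $D_i^{(i)}=(n/d_i)^{e-1}$, consistent with Remark \ref{ehdh}, and $d_k^{(i)}=d_k/d_i$, hence $e_k^{(i)}=e_k$ for $k<i$. A short induction on the recursion $r_k^{(i)}=e_{k-1}^{(i)}r_{k-1}^{(i)}+m_k^{(i)}-m_{k-1}^{(i)}$, using that $e_{k-1}^{(i)}=e_{k-1}$ and the base case $r_1^{(i)}=m_1/d_i=r_1/d_i$, then yields $r_k^{(i)}=r_k/d_i$ for all $k\leq i-1$.

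Combining the two steps, Corollary \ref{ordps} applied to $G_i$ (viewed as a free polynomial of degree $n/d_i$) and its $j$-th pseudo root $G_j$ gives $O(G_i,G_j)=r_j^{(i)}=r_j/d_i$, as required.

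The only delicate point is the bookkeeping in step two: one must be careful that the scaling $\underline{x}^{\lambda/n}=\underline{x}^{(\lambda/d_i)/(n/d_i)}$ is compatible both with the definitions of the $D$-sequence (minors of an $e\times e$ matrix) and with the ambient order $\leq$ on $\mathbb{Z}^e$, so that ``truncating $y_{<m_i}$ at its $j$-th characteristic monomial'' in the $G_i$-world genuinely coincides with ``truncating $y$ at $m_j$'' in the $f$-world. Once this identification is made, the statement reduces to Corollary \ref{ordps}.
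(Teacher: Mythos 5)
Your proof is correct and takes essentially the same route as the paper: the paper likewise deduces the statement from Corollary \ref{ordps} by asserting that $G_1,\ldots,G_{i-1}$ are the pseudo roots of $G_i$ and that the $\underline{r}$-sequence of $G_i$ is $\frac{r_0^1}{d_i},\ldots,\frac{r_0^e}{d_i},\frac{r_1}{d_i},\ldots,\frac{r_{i-1}}{d_i}$. Your write-up merely supplies the scaling computations ($D_k^{(i)}=D_k/d_i^{e}$, $e_k^{(i)}=e_k$, $r_k^{(i)}=r_k/d_i$) and the truncation compatibility that the paper leaves implicit.
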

\begin{proof}{.} This is an immediate consequence of Corollary \ref{ordps} because $\{G_1,\cdots,G_{i-1}\}$ is the set of pseudo-approximate roots of $G_i$ and the $\underline{r}$ sequence of $G_i$ is given by $\frac{r_0^1}{d_i},\cdots,\frac{r_0^e}{d_i},\frac{r_1}{d_i},\cdots,\frac{r_{i-1}}{d_i}$. $\blacksquare$
\end{proof}

\noindent Next we shall  prove that $(r_0^1,\cdots,r_0^e,r_1,\cdots,r_h)$ is a system of generators of $\Gamma(f)$. We shall need the following result:

\begin{lemma} \label{uniqueness}Let the notations be as above and let $\underline{\alpha}=(\alpha_0^1,\cdots,\alpha_0^e,\alpha_1,\cdots,r_h), \underline{\beta}=(\beta_0^1,\cdots,\beta_0^e,\beta_1,\cdots,\beta_h)$ be two elements of ${\mathbb Z}^e\times {\mathbb N}^h$ such that $0\leq \alpha_i,\beta_i<e_i$ for all $i\in\lbrace 1,\cdots,h\rbrace$. If $a=\sum_{i=1}^e\alpha_0^ir_0^i+\sum_{j=1}^h\alpha_jr_j=\sum_{i=1}^e\beta_0^ir_0^i+\sum_{j=1}^h\beta_jr_j$  then $\underline{\alpha}=\underline{\beta}$. 
\end{lemma}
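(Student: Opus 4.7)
The plan is to reduce to a descending induction on the index $j$, using Proposition \ref{er} as the key non-membership statement. Subtracting the two given expressions for $a$, I would write
\[
\sum_{i=1}^{e}\gamma_0^i\,r_0^i+\sum_{j=1}^{h}\gamma_j\,r_j=0,
\]
where $\gamma_0^i=\alpha_0^i-\beta_0^i\in\mathbb{Z}$ and $\gamma_j=\alpha_j-\beta_j$ satisfies $|\gamma_j|<e_j$ for every $j\in\{1,\ldots,h\}$. The goal is to show all $\gamma$'s vanish.

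Next I would run a descending induction on $j\in\{1,\ldots,h\}$ to prove that $\gamma_j=0$. For the top index, isolate $\gamma_h r_h$ in the relation to get
\[
\gamma_h r_h=-\sum_{i=1}^{e}\gamma_0^i\,r_0^i-\sum_{j=1}^{h-1}\gamma_j\,r_j\in (n\mathbb{Z})^e+\sum_{j=1}^{h-1}r_j\mathbb{Z}.
\]
If $\gamma_h\neq 0$, then $|\gamma_h|$ is an integer with $1\leq|\gamma_h|<e_h$, and since the right-hand side is a subgroup of $\mathbb{Z}^e$ (hence closed under negation), $|\gamma_h|\,r_h$ also lies in $(n\mathbb{Z})^e+\sum_{j=1}^{h-1}r_j\mathbb{Z}$. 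This contradicts Proposition \ref{er}, so $\gamma_h=0$. Assuming inductively that $\gamma_j=0$ for $j>k$, the remaining relation is $\sum_{i=1}^{e}\gamma_0^i\,r_0^i+\sum_{j=1}^{k}\gamma_j\,r_j=0$, and exactly the same argument applied to $\gamma_k\,r_k$ and Proposition \ref{er} (with $i=k$) yields $\gamma_k=0$.

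Once all $\gamma_j=0$ for $j\geq 1$, the relation collapses to $\sum_{i=1}^{e}\gamma_0^i\,r_0^i=0$. Since $r_0^1,\ldots,r_0^e$ is the canonical basis of $(n\mathbb{Z})^e$, this forces $\gamma_0^i=0$ for every $i$, completing the argument.

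The only subtle point, and what I would flag as the main obstacle, is that Proposition \ref{er} only forbids $\alpha r_j\in (n\mathbb{Z})^e+\sum_{j'<j}r_{j'}\mathbb{Z}$ for \emph{positive} $\alpha$ with $1\leq\alpha<e_j$, while the differences $\gamma_j$ can be negative. This is easily handled by noting that the relevant set is a subgroup, so membership is preserved under taking negatives; hence replacing $\gamma_j$ by $|\gamma_j|$ does not lose information. Everything else is routine linear manipulation.
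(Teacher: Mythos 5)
Your proof is correct and follows essentially the same route as the paper's: both reduce to a single relation among the $r_j$'s, locate the largest index $k$ with $\gamma_k\neq 0$, and derive a contradiction with Proposition \ref{er} (the paper handles the sign by assuming WLOG $\alpha_k>\beta_k$, while you use the subgroup property and $|\gamma_k|$ — the same point). Your explicit treatment of the final case $\sum_i\gamma_0^i r_0^i=0$ via linear independence of the canonical basis is a detail the paper leaves implicit, but the argument is the same.
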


\begin{proof}{.} Suppose that $\underline{\alpha}\not=\underline{\beta}$ and let $k$ be the smallest integer $\geq 1$ such that $\alpha_i=\beta_i$ for all $i\geq k+1$. Suppose that $\alpha_k>\beta_k$. We have $(\alpha_k-\beta_k)r_k=\sum_{i=1}^e(\beta_0^i-\alpha_0^i)r_0^i+\sum_{j=1}^{k-1}(\beta_j-\alpha_j)r_j$. This contradicts Proposition \ref{er}.$\blacksquare$ 
\end{proof}

\begin{lemma}\label{pseudoexpansion}
Let  $g\in\mathbb{K}_{C}[\![\underline{x}]\!][y]$ and suppose that $f\not|g$. There exists a unique $\underline{\theta}=(\theta_0^1,\cdots,\theta_0^e,\theta_1,\cdots,\theta_h)\in {\mathbb Z}^e\times {\mathbb N}^h$ such that $0\leq \theta_j<e_j$ for all $j\in\lbrace 1,\cdots,h\rbrace$ and   $O(f,g)=\sum_{i=1}^e\theta_0^ir_0^i+\sum_{j=1}^h\theta_jr_j$. In particular $\Gamma(f)$ is generated by $r_0^1,\cdots,r_0^e,r_1,\cdots,r_h$.
 
\end{lemma}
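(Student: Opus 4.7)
The uniqueness half of the lemma is immediate from Lemma \ref{uniqueness}, so the task reduces to establishing the existence of such an expansion for $O(f,g)$. The plan is to combine the $G$-adic expansion of Section 1, applied to the sequence of pseudo-roots $G_1,\ldots,G_h$, with Corollary \ref{ordps}.

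First, since $f$ is monic of degree $n$ in $y$, Euclidean division gives $g=qf+r$ with $\deg_y(r)<n$; evaluating at a root $y(\underline{x})$ yields $O(f,g)=O(f,r)$, and the hypothesis $f\not|g$ ensures $r\neq 0$. We may therefore assume $\deg_y(g)<n=e_1\cdots e_h$. Iterating Proposition 1 along $G_1,\ldots,G_h$ (whose $y$-degrees $n/d_1,\ldots,n/d_h$ and successive quotients $e_1,\ldots,e_h$ match the framework of Section 1) produces a unique expansion
$$g=\sum_{\underline{b}\in B}c_{\underline{b}}(\underline{x})\,G_1^{b_1}\cdots G_h^{b_h},\qquad B=\{\underline{b}\in\mathbb{N}^h:0\leq b_j<e_j\},$$
with scalar coefficients $c_{\underline{b}}\in\mathbb{K}_{C}[[\underline{x}]]$. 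Substituting $y=y(\underline{x})$ and invoking Corollary \ref{ordps} in the form $nO(G_j(\underline{x},y(\underline{x})))=r_j$, the $n$-scaled order of each nonzero summand becomes
$$nO(c_{\underline{b}})+\sum_{j=1}^h b_j r_j=\sum_{i=1}^e\alpha_0^i(\underline{b})\,r_0^i+\sum_{j=1}^h b_j r_j,$$
where $\alpha_0^i(\underline{b})\in\mathbb{Z}$ are the coordinates of $nO(c_{\underline{b}})\in(n\mathbb{Z})^e$ in the basis $r_0^1,\ldots,r_0^e$.

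The crucial point is that these orders are pairwise distinct as $\underline{b}$ ranges over the indices with $c_{\underline{b}}\neq 0$: any coincidence would, by Lemma \ref{uniqueness} (whose hypothesis $0\leq \alpha_j,\beta_j<e_j$ is exactly the constraint built into $B$), force $\underline{b}=\underline{b}'$. Consequently no cancellation of leading terms can occur after substituting $y=y(\underline{x})$, so $O(f,g)=n\,O(g(\underline{x},y(\underline{x})))$ equals the unique minimum among the orders of the summands. Reading off the corresponding index $\underline{b}^*$ and scalars $\alpha_0^i(\underline{b}^*)$ produces the required expression, and the generation statement for $\Gamma(f)$ then follows immediately. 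The main obstacle I anticipate is the degree bookkeeping step: verifying that the iterated application of Proposition 1 really yields coefficients $c_{\underline{b}}$ of $y$-degree zero, i.e.\ lying in $\mathbb{K}_{C}[[\underline{x}]]$; once that is established, the remainder of the argument reduces to a single invocation of Lemma \ref{uniqueness}.
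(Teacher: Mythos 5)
Your proof is correct and follows essentially the same route as the paper: the paper expands $g$ with respect to $(G_1,\ldots,G_h,f)$ (the terms involving $f$ vanishing at the root, which is your Euclidean-division reduction), observes that distinct monomials $c_{\underline{b}}G_1^{b_1}\cdots G_h^{b_h}$ have distinct values of $O(f,\cdot)$ by Lemma \ref{uniqueness}, and concludes that $O(f,g)$ is realized by a unique monomial. The degree bookkeeping you flag is unproblematic: $\deg_y G_i=n/d_i$ with $d_1=n$, so $\deg_y G_1=1$ and $\sum_{j}(e_j-1)\frac{n}{d_j}=n-1$, which is exactly the setting of the $G$-adic expansion of Section 1 and forces the coefficients $c_{\underline{b}}$ to lie in $\mathbb{K}_{C}[[\underline{x}]]$.
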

 
\begin{proof}{.} \label{Exp} Let $g=\sum_{\underline{\theta}}c_{\underline{\theta}}(\underline{x})G_{1}^{\theta_1}\cdots G_{h}^{\theta_{h}}f^{\theta_{h+1}}$ be the expansion of $g$ with respect to $(G_1,\cdots,G_h,f)$ and recall that for all $\underline{\theta}$, if $c_{\underline{\theta}}\not=0$ then  $\underline{\theta}=(\theta_{1},\cdots,\theta_{h+1})\in \{(\beta_{1},\cdots,\beta_{h+1})\in{\mathbb N}^{h+1}, \ \ 0\leq \beta_{j}<e_{j}\ \forall j=1,\cdots,h\}$. By abuse of notations we shall call a monomial a term of the form $M=c_{\underline{\theta}}(\underline{x})G_{1}^{\theta_1}\cdots G_{h}^{\theta_{h}}f^{\theta_{h+1}}$ The hypothesis implies that there exists at least one $\underline{\theta}$ such that $c_{\underline{\theta}}\not= 0$ and $\theta_{h+1}=0$. Let $M=c_{\underline{\theta}}(\underline{x})G_{1}^{\theta_1}\cdots G_{h}^{\theta_{h}},$ $ N=c_{\underline{\theta}'}(\underline{x})G_{1}^{\theta'_1}\cdots G_{h}^{\theta_{h}'}$ be two distinct monomials of $g$. It follows from Lemma \ref{uniqueness} that $O(f,M)\not= O(f,N)$. Hence there exists a unique monomial $\tilde{M}$ of $g$ such that $O(f,g)=O(f,\tilde{M})$. This proves our assertion. $\blacksquare$
\end{proof} 
\begin{rem}\label{nonzero} In the Lemma above, if deg$_yg<\frac{n}{d_i}$ for some $i\in\lbrace 1,\cdots,h\rbrace$, then 
$O(f,g)\in (n{\mathbb Z})^e+\sum_{k=1}^{i-1}r_k{\mathbb N}$. Moreover, $O(f,g)=d_{i}O(G_{i},g)$.  In fact, in this case, any monomial $M$ of the expansion of $g$ with respect to $(G_1,\cdots,G_h,f)$ is a monomial in $G_1,\cdots,G_{i-1}$. Hence this expansion coincides with that of $g$ with respect to $(G_1,\cdots,G_{i-1},G_i)$. If $M$ is the unique monomial such that $O(f,g)=O(f,M)$  then $M$ is the unique monomial such that $O(G_i,g)=O(G_i,M)$. But $O(f,M)=d_iO(G_i,M)$. This proves our assertion.

\end{rem}
 

\noindent The next Proposition shows that we can calculate a system of generators of $\Gamma(f)$ only with the set of approximate roots of $f$. It uses Lemma \ref{pseudoexpansion} and Remark \ref{nonzero}, and the proof is similar to the proof of similar results in other situations (see \cite{Abhyankar5}, \cite{A.Assi1}, or \cite{GP}).

\begin{prop}\label{Gdeformation} 
For all $i\in\lbrace 1,\cdots,h\rbrace$, let $g_i={\rm App}(f,d_i)$.  We have $O(f,g_{i})=r_{i}$.
\end{prop}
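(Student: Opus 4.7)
The plan is to compare $g_i={\rm App}(f,d_i)$ with the pseudo-root $G_i$, exploiting the fact that both are monic in $y$ of the same degree $n/d_i$ and that $O(f,G_i)=r_i$ by Corollary \ref{ordps}. Set $\delta:=g_i-G_i$; since both are monic of degree $n/d_i$, we have $\deg_y\delta<n/d_i$. If $\delta=0$ the conclusion is immediate.

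Otherwise, Remark \ref{nonzero} applied to $\delta$ gives $O(f,\delta)\in(n\mathbb{Z})^e+\sum_{k=1}^{i-1}r_k\mathbb{N}$. Applying Proposition \ref{er} with $\alpha=1$ shows $r_i\notin(n\mathbb{Z})^e+\sum_{k<i}r_k\mathbb{Z}$, so in particular $O(f,\delta)\neq r_i$. Consequently the leading monomials of $G_i(\underline{x},y_1)$ and $\delta(\underline{x},y_1)$ cannot cancel, and
\[
O(f,g_i)=\min\bigl(r_i,\,O(f,\delta)\bigr).
\]
If $O(f,\delta)>r_i$ we conclude $O(f,g_i)=r_i$.

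The remaining case is $O(f,\delta)<r_i$, which we must exclude. Here we invoke the defining property of the approximate root $g_i$: in the $g_i$-adic expansion $f=g_i^{d_i}+\sum_{j=2}^{d_i}a_j\,g_i^{d_i-j}$, the coefficient of $g_i^{d_i-1}$ vanishes and $\deg_ya_j<n/d_i$. Evaluating $f(\underline{x},y_1)=0$ yields $g_i(\underline{x},y_1)^{d_i}=-\sum_{j\geq 2}a_j(\underline{x},y_1)g_i(\underline{x},y_1)^{d_i-j}$, so $d_i\,O(f,g_i)=O\!\bigl(f,\sum_{j\geq 2}a_jg_i^{d_i-j}\bigr)$. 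Each summand has order $O(f,a_j)+(d_i-j)O(f,g_i)$, with $O(f,a_j)\in(n\mathbb{Z})^e+\sum_{k<i}r_k\mathbb{N}$ by Remark \ref{nonzero}. One then argues that, under $O(f,g_i)<r_i$, no cancellation pattern among these summands is compatible with the arithmetic constraints imposed by Proposition \ref{er} together with the non-negativity of orders in the cone $C$; the absence of an $a_1$ term (the sum starts at $j=2$) is crucial because it prevents cancellation from involving the lowest admissible index.

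The main obstacle is precisely this last contradiction, where one must combine the modular constraint $(j_2-j_1)\,O(f,g_i)\in(n\mathbb{Z})^e+\sum_{k<i}r_k\mathbb{Z}$ (arising for any two cancellation partners $j_1<j_2$) with Proposition \ref{er} and positivity in $C$ to force $O(f,g_i)=r_i$. An equivalent route is the iterated Tschirnhausen transform: $\tau_f^k(G_i)\to g_i$ in the $\underline{x}$-adic topology, and the same Proposition \ref{er} arithmetic shows each step preserves $O(f,\cdot)=r_i$, so the limit $g_i$ inherits this value.
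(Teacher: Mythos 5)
Your opening reduction is sound and is a clean reformulation: writing $g_i=G_i+\delta$ with $\deg_y\delta<n/d_i$, Remark \ref{nonzero} puts $O(f,\delta)$ in $(n\mathbb{Z})^e+\sum_{k<i}r_k\mathbb{N}$ while $r_i$ is not in that group (Proposition \ref{er}), so the leading terms cannot cancel and $O(f,g_i)=\min(r_i,O(f,\delta))$. But the entire content of the proposition is the case you must ``exclude,'' namely $O(f,\delta)<r_i$, and there you have a genuine gap: you never carry out the argument, and the strategy you sketch does not work as stated. Indeed, if $O(f,g_i)=O(f,\delta)<r_i$, then $O(f,g_i)$ itself already lies in $(n\mathbb{Z})^e+\sum_{k<i}r_k\mathbb{Z}$, so every difference $(j_2-j_1)\,O(f,g_i)$ automatically lies in that group; the modular constraint you propose to combine with Proposition \ref{er} is vacuous precisely in the bad case, and no contradiction comes out of it. Nor does the vanishing of $a_1$ in the $g_i$-adic expansion of $f$ ``prevent cancellation'' in the way you suggest: with $v:=O(f,g_i)$ in the group, the equation $d_iv=O(f,a_j)+(d_i-j)v$ for some $j\geq 2$ is arithmetically unobstructed.

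The route that actually closes this is the one the paper takes (and which you gesture at in your last sentence without executing): start from $G_i$, whose order $r_i$ is \emph{known} and \emph{not} in the group, and show that each Tschirnhausen step preserves $O(f,\cdot)=r_i$ by proving $O(f,C_1)>r_i$ for the first coefficient of the adic expansion. Even there a further point is essential and missing from your sketch: for $i<h$ one cannot argue on the $G_i$-adic expansion of $f$ itself, because among its $d_i+1$ terms the orders $O(f,C_\alpha G_i^{d_i-\alpha})$ and $O(f,C_kG_i^{d_i-k})$ coincide whenever $e_i$ divides $\alpha-k$, which happens as soon as $d_i>e_i$. The paper therefore runs a downward induction, using $g_i={\rm App}(g_{i+1},e_i)$ and the $G_i$-adic expansion of $g_{i+1}$ (only $e_i+1$ terms, where pairwise distinctness does hold by Proposition \ref{er}), together with the already established $O(f,g_{i+1})=r_{i+1}$. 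Without this induction and the distinctness-of-orders argument forcing $O(f,C_k)>kr_i$, your proof does not go through.
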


\begin{proof}{.}
Let $i=h$ and consider the $G_{h}$-adic expansion of $f$, 
$f=G_{h}^{d_{h}}+C_{1}(\underline{x},y)G_{h}^{d_{h}-1}+\cdots+C_{d_{h}}(\underline{x},y)=\sum_{k=0}^{d_{h}}C_{k}(\underline{x},y) G_{h}^{d_{h}-k}$
where $C_0=1$ and $C_{k}(\underline{x},y)\in\mathbb{K}_{C}[\![\underline{x}]\!][y]$ with $deg_{y}(C_{k}(\underline{x},y))<\frac{n}{d_{h}}$ for all $k=1,\cdots,d_{h}$.  Consider the Tschirnhausen transform of $G_{h}$ with respect to $f$ given by $\tau_{f}(G_{h})=G_{h}+d_{h}^{-1} C_{1}(\underline{x},y).$
We have   $O(f,G_{h})=r_{h}$, hence we need to prove that $O(f,C_{1})>r_{h}$.\\ 
Let $k\in\{0,\cdots,d_{h}-1\}$.  For all $\alpha\neq k$, we have $O(f,C_{\alpha}G_{h}^{d_{h}-\alpha})\neq O(f,C_{k}G_{h}^{d_h-k})$. In fact, suppose that  $O(f,C_{\alpha}G_{h}^{d_{h}-\alpha})= O(f,C_{k}G_{h}^{d_h-k})$, that is $O(f,C_{\alpha})+(d_{h}-\alpha)r_{h}=O(f,C_{k})+(d_{h}-k)r_{h}$. Suppose that $\alpha>k$, then $(\alpha-k)r_{h}=O(f,C_{\alpha})-O(f,C_{k})$. But deg$_{y}(C_{\alpha}),{\rm deg}_{y}(C_{k})<\frac{n}{d_{h}}$, then by Remark \ref{nonzero}, $O(f,C_{\alpha}),O(f,C_{k})\in(n\mathbb{Z})^{e}+r_{1}\mathbb{N}+\cdots+r_{h-1}\mathbb{N}$, and so $(\alpha-k)r_{h}\in (n\mathbb{Z})^{e}+r_{1}\mathbb{N}+\cdots+r_{h-1}\mathbb{N}$, with $0<\alpha-k<d_{h}=e_h$. This contradicts Proposition \ref{er}. Now a similar argument shows that $O(f,C_{k}G_{h}^{d_{h}-k})=O(f,C_{k})+(d_{h}-k)r_{h}\neq O(f,C_{d_{h}})$. As $f(\underline{x},y(\underline{x}))=0$,  we get that $O(f,C_{d_h})=O(f,G_h^{d_h})=r_hd_h<O(C_kG_k^{d_h-k})$, hence $O(f,C_k)>kr_h$. This is true for $k=1$, consequently $O(f,C_1)>r_h$, and $O(f,\tau_f(G_h))=r_h$. Repeating this process, we get that $O(f,\tau^{l}_{f}(G_{h}))=r_{h}$ for all $l\geq 1$. But $g_{h}={\rm App}(f,d_h)=\tau^{l_0}_{f}(G_{h})$ for some $l_0$. Hence $O(f,g_{h})=r_{h}$.\\
Now suppose that $O(f,g_{k})=r_{k}$ for all $k>i$, and let us prove that $O(f,g_{i})=r_{i}$. Note that  $g_{i}={\rm App}(g_{i+1},e_i)$.  Let  
\begin{equation}\label{secondgi}
g_{i+1}=G_{i}^{e_{i}}+\beta_{1}(\underline{x},y)G_{i}^{e_{i}-1}+\cdots+\beta_{e_{i}}(\underline{x},y)
\end{equation}
be the $G_{i}$-adic expansion of $g_{i+1}$ and consider $O(f,g_{i+1})$. For all $k\in\lbrace 1,\cdots,e_i\rbrace$,   $O(f,\beta_kG_{k}^{e_i-k})=O(f,\beta_k)+(e_i-k)r_i$. But $O(f,\beta_k)\in (n{\mathbb Z})^e+\sum_{j=1}^{i-1}r_j{\mathbb N}$ because deg$_y\beta_k<\frac{n}{d_i}$, and $r_{i+1}\notin (n{\mathbb Z})^e+\sum_{j=1}^{i}r_j{\mathbb N}$. Now a similar argument as above shows that $r_ie_i=O(f,G_i^{e_i})=O(f,\beta_{e_i})<O(\beta_1G_i^{e_i-1})$. Hence $O(f,\beta_1)>r_i$. In particular 
$$O(f,\tau_{g_{i+1}}(G_{i}))=O(f,G_{i}+\frac{1}{e_{i}}\beta_{1})=r_{i}$$
Applying the same process to $f$ and $\tau_{g_{i+1}}(G_{i})$ instead of $f$ and $G_{i}$. We get that $O(f,\tau_{g_{i+1}}^{2}(G_{i}))=r_{i}$. But $g_i=\tau_{g_{i+1}}^{{s}_{i}}(G_{i}))$ for some $s_i\in {\mathbb N}$, hence $O(f,g_{i})=O(f,\tau_{g_{i+1}}^{e_{i}}(G_{i}))=r_{i}$. This proves our assertion. $\blacksquare$ 
\end{proof}


\begin{rem}\label{free} Let the notations be as above. The $d$-sequence  $\lbrace d_i\rbrace_{1\leq i\leq h+1}$ introduced in Definition 5 satisfies $d_1=n>d_2>\cdots >d_{h+1}=1$. Moreover,  by Proposition \ref{er}, for all   $i\in \lbrace 1,\cdots,h\rbrace$, we have $e_{i}r_{i}\in(n\mathbb{Z})^{e}+\sum_{j=1}^{i-1}r_{j}\mathbb{Z}$. Following the notations of \cite{A.Assi2}, the semigroup $\Gamma(f)$ is a free affine semigroup with respect to the arrangement $(r_0^1,\cdots,r_0^e,r_1,\cdots,r_h)$ (this notion has been introduced first for numerical semigroups, i.e. monoids of ${\mathbb N}$ with finite complement in ${\mathbb N}$). Referring to free affine semigroups, we have chosen to use here the notion of free polynomials.
\end{rem}
\section{Solutions of formal power series}
Let $f(\underline{x},y)=y^{n}+a_{1}(\underline{x})y^{n-1}+\cdots+a_{n-1}(\underline{x})y+a_{n}(\underline{x})$ be a polynomial of degree $n$ in $\mathbb{K}[\![\underline{x}]\!][y]$. In this section we shall apply the results of Section 3 to $f$ seeing as a polynomial in $y$ whose coefficients are in $\mathbb{K}_{C}[\![\underline{x}]\!]$ for a specific line free cone $C$. We first connect, modulo a preparation result, the polynomial $f$ to a quasi-ordinary polynomial, which is irreducible if and only if $f$ is irreducible in $\mathbb{K}_{C}[\![\underline{x}]\!][y]$, and in this case, it is free. Hence the set of roots of the quasi-ordinary polynomials are connected with the set of roots of $f$ in $\mathbb{K}_{C}[\![\underline{x}^{\frac{1}{n}}]\!]$. We start with the following preparation result.


\noindent Let $\Delta(\underline{x})$ be the discriminant of $f$ in $y$, and write $\Delta(\underline{x})=\sum_{p\in \mathbb{N}^{e}}c_{p}\underline{x}^{p} =\sum_{d\geq 0}u_{d}(\underline{x})$ where for all $d\geq 0$,
$u_{d}$  is the homogeneous component of degree $d$ of $\Delta$. Let $a=inf\{d,\ u_{d}\neq 0\}$. If  $a=0$, then $f$ is a quasi-ordinary polynomial. Suppose that $a > 0$. In the next remark we will show how to prepare our polynomial so that the smallest homogeneous component $u_a$ of $\Delta$ contains a monomial in $x_1$.


\begin{rem}\label{psiisomorphism}({\bf Preparation}) Consider the mapping $\xi:\mathbb{K}[\![\underline{x}]\!]\mapsto \mathbb{K}[\![\underline{X}]\!]$,
defined by $\xi(x_{1})=X_{1}$ and $\xi(x_{i})=X_{i}+t_iX_{1}$ for all $i\in\{2,\cdots,e\}$, where $t_2,\cdots, t_n$ are parameters. Let 

$$
\psi:\ \mathbb{K}[\![\underline{x}]\!][y]\ \mapsto\ \mathbb{K}[\![\underline{X}]\!][y]
$$

\noindent  be the map defined as follows: if $H=h_{0}(\underline{x})y^{m}+\cdots +h_{m-1}(\underline{x})y+h_{m}(\underline{x})\in \mathbb{K}[\![\underline{x}]\!][y]$  then $\psi(H)=\xi(h_{0}(\underline{x}))y^{m}+\cdots +\xi(h_{m-1}(\underline{x}))y+\xi(h_{m}(\underline{x}))$. Then we easily prove that $\psi$ is an isomorphism. If $\Delta'$ is the discriminant of $\psi(f)$ and if  $v_d(\underline{X})=u_{d}(X_{1},X_{2}+t_2X_{1},\cdots,X_{e}+t_eX_{1})$ then  $\Delta'=\sum_{d\geq a} v_d$. But
$v_{d}(\underline{X})=\varepsilon_{d}(t_2,\cdots, t_e)X_{1}^{d}+v'_d$,  
where $v'_{d}$ is a homogeneous polynomial of degree $d$, and $\varepsilon_{d}(t_2,\cdots,t_e)$ is a polynomial in $t_2,\cdots,t_e$. We claim that $\varepsilon_{a}(t_2,\cdots,t_e)$ is a nonzero polynomial, hence we can choose $t_2,\cdots, t_e \in\mathbb{K}$ such that $\varepsilon_{a}(t_2,\cdots,t_e)\neq 0$.  In fact, let 

$$
u_a=\sum_{k=1}^m c_kx_1^{a_1^k}\cdots  x_e^{a_e^k}
$$

\noindent  with $a_1^k+\cdots +a_2^k=a, c_k\neq 0$ for all $k\in\lbrace 1,\cdots,m\rbrace$, and  $(a_1^k,\cdots,a_e^k) \neq  (a_1^j,\cdots,a_e^j)$ for all $k\neq j$. In particular  $(a_2^k,\cdots,a_e^k) \neq  (a_2^j,\cdots,a_e^j)$ for all $k\neq j$. We have: 

$$
u_a(X_1,X_2+t_2X_1,\cdots,X_e+t_eX_1)=\sum_{k=1}^m c_kX_1^{a_1^k}(X_2+t_2X_1)^{a_2^k}\cdots (X_e+t_eX_1)^{a_e^k}
$$

$$
= \sum_{k=1}^mc_kX_1^{a_1^k}(t_2X_1)^{a_2^k}\cdots (t_eX_1)^{a_e^k}+v'_a=\sum_{k=1}^mc_kt_2^{a_2^k}\cdots t_e^{a_e^k} X_1^{a_1^k} X_1^{a_2^k}\cdots X_1^{a_e^k}+v'_a
$$
$$
= \sum_{k=1}^m c_k t_2^{a_2^k}\cdots t_e^{a_e^k} X_1^{a_1^k+a_2^k+\cdots+a_e^1}+v'_a=(\sum_{k=1}^m c_k t_2^{a_2^k}\cdots t_e^{a_e^k}) X_1^{a}+v'_a
$$
 \noindent where $v_{a}$ is a homogeneous polynomial of degree $a$, such that $v_a(1,0,\cdots,0)=0$. Since ($a_2^k,\cdots,a_e^k) \neq  (a_2^j,\cdots,a_e^j)$ for all $k\neq j$ and $c_k\neq 0$ for all $k\in\lbrace 1,\cdots, m\rbrace$,  then $\varepsilon_{a}(t_2,\cdots, t_e)=\sum_{k=1}^m c_k t_2^{a_2^k}\cdots t_e^{a_e^k}$ is a non zero polynomial. Hence, we can choose $t_2,\cdots,t_e\in\mathbb{K}$ such that  $\varepsilon_{a}(t_1,\cdots,t_e)\neq 0$.
\end{rem}


\noindent In the following we shall say that a polynomial $f$ is prepared if it satisfies the condition of Remark \ref{psiisomorphism}, i.e. its discriminant is of the form  $\Delta=\sum_{d\geq 0}u_{d}$ such that the smallest homogeneous component is of the form $u_{a}=c_{a}x_{1}^{a}+u'_{a}$  with $c_a\neq 0$ and $u'_a\in{\mathbb K}[\underline{x}]$.  The next proposition shows that a prepared polynomial is birationally equivalent to a quasi-ordinary polynomial.
\begin{prop}\label{change}
With the notations above, if $f$ is a prepared polynomial then $F(X_{1},\cdots,X_{e},y)=f(X_{1},X_{2}X_{1},\cdots, X_{e}X_{1},y)$ is a quasi-ordinary polynomial.
\end{prop}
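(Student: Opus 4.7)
The plan is to compute the $y$-discriminant $\Delta_F$ of $F$ explicitly and show that it has the form $X_1^a\varepsilon(\underline{X})$ with $\varepsilon$ a unit of $\mathbb{K}[[\underline{X}]]$. The first observation is that since the substitution $x_1\mapsto X_1$, $x_i\mapsto X_iX_1$ for $i\geq 2$ acts only on the $\underline{x}$ variables, the $y$-discriminant commutes with it:
$$\Delta_F(\underline{X})=\Delta(X_1,X_2X_1,\ldots,X_eX_1).$$
A brief verification is needed to show this substitution defines a continuous ring map $\mathbb{K}[[\underline{x}]]\to \mathbb{K}[[\underline{X}]]$, but this follows from noting that each monomial $x_1^{p_1}\cdots x_e^{p_e}$ maps to $X_1^{p_1+\cdots+p_e}X_2^{p_2}\cdots X_e^{p_e}$, and distinct source monomials have distinct images, so the coefficient of each monomial in the target series comes from at most one term of the source.

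Next I would analyze the action on homogeneous components. If $u_d$ is homogeneous of degree $d$, then
$$u_d(X_1,X_2X_1,\ldots,X_eX_1)=X_1^d\,u_d(1,X_2,\ldots,X_e)=:X_1^d\,\tilde u_d(X_2,\ldots,X_e).$$
Summing the expansion $\Delta=\sum_{d\geq a}u_d$ then yields
$$\Delta_F(\underline{X})=X_1^a\Bigl(\tilde u_a(X_2,\ldots,X_e)+\sum_{d>a}X_1^{d-a}\,\tilde u_d(X_2,\ldots,X_e)\Bigr)=:X_1^a\,\varepsilon(\underline{X}).$$

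The final and crucial step is to show that $\varepsilon$ is a unit, which is where the preparation hypothesis enters decisively. Evaluating at the origin gives $\varepsilon(\underline{0})=\tilde u_a(0,\ldots,0)=u_a(1,0,\ldots,0)$. Writing $u_a=c_ax_1^a+u'_a$ as in the preparation, each monomial of $u'_a$ is homogeneous of degree $a$ and distinct from $x_1^a$, so it must contain some $x_i$ with $i\geq 2$; hence $u'_a(1,0,\ldots,0)=0$. Therefore $\varepsilon(\underline{0})=c_a\neq 0$, so $\varepsilon$ is a unit in $\mathbb{K}[[\underline{X}]]$, and $\Delta_F=\underline{X}^{\underline{\alpha}}\,\varepsilon(\underline{X})$ with $\underline{\alpha}=(a,0,\ldots,0)$, which is precisely the form required for $F$ to be quasi-ordinary.

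I do not anticipate a serious obstacle; the content of the proof is really to see how the preparation step has been engineered so that the constant $c_a$ survives the substitution as the value $\varepsilon(\underline{0})$. The only mild care is in checking the well-definedness of the substitution as a map of power series rings, and in observing that the $y$-discriminant commutes with this substitution (which holds because the discriminant is a universal polynomial in the coefficients $a_1,\ldots,a_n$, and these are the only places the $\underline{x}$'s appear in $f$).
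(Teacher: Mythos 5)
Your proof is correct and follows essentially the same route as the paper: substitute into the discriminant, factor $X_1^d$ out of each homogeneous component $u_d$, and use the preparation hypothesis to conclude that the resulting factor $\varepsilon$ has nonzero constant term $c_a$ and is therefore a unit. Your extra care about the well-definedness of the substitution and the explicit evaluation $\varepsilon(\underline{0})=u_a(1,0,\ldots,0)=c_a$ only makes the argument more complete than the paper's version.
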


\begin{proof}{.} Let $\Delta$ be the discriminant of $f$. The  discriminant $\Delta_{N}$ of $F$ is $ \Delta_{N}=\Delta(X_{1},X_{2}X_{1},\cdots, X_{e}X_{1})$.
Write $\Delta=\sum_{d\geq a}u_{d}$, where $u_{d}$ is the homogeneous component of degree $d$ of $\Delta$ and $u_a\not=0$, then 
$\Delta_{N}=\sum_{d\geq a}w_{d}(\underline{X})$ with $w_d(\underline{X})=u_d(X_1,X_2X_1,\cdots,X_eX_1)$. For all $d\geq a$, we have  $$w_d(\underline{X})=X_1^du_d(1,X_2,\cdots,X_e)=X_{1}^{d}(c_{d}+\varepsilon_{d}(X_{1},\cdots,X_{e}))=X_{1}^{a}X_{1}^{d-a}(c_{d}+\varepsilon_{d}(X_{1},\cdots,X_{e}))$$
\noindent where $c_d\in{\mathbb K}$ and $\varepsilon_{d}(0,\cdots,0)=0$. Since $f$ is prepared, then $c_a \neq 0$, hence 
$\Delta_{N}=X_{1}^{a}(c_a+\varepsilon(\underline{X}))$  and $\varepsilon(\underline{X})$ is a non unit in $\mathbb{K}[\![\underline{X}]\!]$. So $F$ is a quasi-ordinary polynomial. $\blacksquare$

\end{proof}

\noindent We will now introduce the following line free cone.

\begin{prop}\label{conecone}
The set $C=\{(c_{1},\cdots,c_{e})\in \mathbb{R}^{e},c_{1}\geq -(c_{2}+\cdots+c_{e}),\ c_{i}\geq 0\ \forall\  2\leq i\leq e\}$ is a line free convex cone.
\end{prop}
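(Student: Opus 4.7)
The plan is to verify the three defining properties of a line free (convex, finitely generated, rational) cone directly from the definition of $C$, as the statement has no auxiliary hypotheses and amounts to a routine check of linear inequalities.

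First I would rewrite the defining condition as the intersection of the $e$ closed linear half-spaces
$\{c_1+c_2+\cdots+c_e\geq 0\}$ and $\{c_i\geq 0\}$ for $2\leq i\leq e$.  Convexity is then immediate since each half-space is convex and convexity is preserved under intersection.  To see that $C$ is a cone, I would take $v=(c_1,\ldots,c_e)\in C$ and $\lambda\geq 0$ and multiply both sides of each defining inequality by $\lambda$; since $\lambda\geq 0$ the inequalities are preserved, so $\lambda v\in C$.

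For the line free property, I would suppose $v=(c_1,\ldots,c_e)\in C$ and $-v=(-c_1,\ldots,-c_e)\in C$.  The inequalities $c_i\geq 0$ and $-c_i\geq 0$ for $i\geq 2$ force $c_2=\cdots=c_e=0$.  Feeding this into the remaining pair of inequalities $c_1\geq 0$ and $-c_1\geq 0$ yields $c_1=0$, hence $v=0$, as required.

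Finally, since the paper's standing convention is that all cones are finitely generated and rational, I would make this explicit by exhibiting generators.  Namely, I would set $v_1=(1,0,\ldots,0)$ and, for $i\in\{2,\ldots,e\}$, $v_i=-e_1+e_i=(-1,0,\ldots,0,1,0,\ldots,0)$ with the $1$ in position $i$.  Each $v_i\in C\cap\mathbb{Q}^e$, and for any $(c_1,\ldots,c_e)\in C$ the decomposition $(c_1,\ldots,c_e)=\bigl(c_1+\sum_{i\geq 2}c_i\bigr)v_1+\sum_{i\geq 2}c_iv_i$ has non-negative coefficients precisely because of the two families of defining inequalities.  There is no real obstacle here; the whole argument is bookkeeping on linear inequalities, and the only place where one must be slightly careful is recognizing that the first defining inequality is what makes the coefficient of $v_1$ non-negative in the generating decomposition, which is exactly the reason $C$ is cut out by that particular inequality.
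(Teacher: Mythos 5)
Your proof is correct and follows essentially the same route as the paper: check the cone and convexity properties directly from the defining inequalities, and for line-freeness use $c_i\geq 0$ for $i\geq 2$ to force those coordinates to vanish and then the inequality $c_1\geq -(c_2+\cdots+c_e)$ to pin down $c_1$. Your explicit exhibition of the generators $v_1=(1,0,\ldots,0)$ and $v_i=-e_1+e_i$ goes beyond the paper's proof (which leaves finite generation and rationality to the standing convention), and the decomposition you give is easily verified to be correct with non-negative coefficients.
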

\begin{proof}{.} Let $c=(c_{1},\cdots,c_{e})\in C$ and $\lambda \geq 0$, then obviously $\lambda c\in C$, hence $C$ is a cone. Moreover, if $c=(c_{1},\cdots,c_{e}),c'=(c_{1}',\cdots,c_{e}')\in C$, then $c+c'\in C$, and so $C$ is a convex cone.  Let $c=(c_{1},\cdots,c_{e})\in C$ such that $c\neq \underline{0}$, and let us prove that $-c=(-c_{1},\cdots,-c_{e})\notin C$. We have $c_{i}\geq 0$ for all $i\in \{2,\cdots,e\}$. If $c_{i}>0$ for some $i\in \{2,\cdots,e\}$, then obviously $-c=(-c_{1},\cdots,-c_{e})\notin C$. If $c_{i}=0$ for all $i\in\{2,\cdots,e\}$, then $c_{1}\geq -(c_{2}+\cdots +c_{e})=0$, but $c\neq \underline{0}$, then $c_{1}>0$, and so $-c=(-c_{1},0,\cdots,0)\notin C$. Hence $C$ is a line free cone.$\blacksquare$
\end{proof}
\begin{tikzpicture}
    \draw [thin, gray, ->] (0,-0.5) -- (0,2.5)  
    node [above, black] {$y$};  
    \draw [thin, gray, ->] (-3,0) -- (4,0)   
    node [right, black] {$x$}; 
    
    \draw [draw=black, thick] (0,0) -- (-2,2); 
     \draw [draw=black, thick] (-1,1) -- (2,0);
     \draw [draw=black, thick] (-1.5,1.5) --(3,0);
     \draw [draw=black, thick] (-0.5,0.5) --(1,0);
   \draw [draw=black, thick] (0,0) --(3.7,0);  
\end{tikzpicture}

\noindent Along this Section,  $C$ will denote the cone defined in  proposition $\ref{conecone}$.

\begin{lemma}\label{support}
Let $Y(\underline{X})$ be an element of    $\mathbb{K}[\![\underline{X}]\!]$, and let $y(\underline{x})=Y(x_{1},x_{2}x_{1}^{-1},\cdots, x_{e}x_{1}^{-1})$. We have $y(\underline{x})\in \mathbb{K}_{C}[\![\underline{x}]\!]$.
\end{lemma}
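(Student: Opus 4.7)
The plan is to work monomial-by-monomial on the formal power series expansion of $Y$, verify that each resulting exponent lies in the cone $C$, and then check that the whole substitution assembles into a well-defined element of $\mathbb{K}_C[[\underline{x}]]$.

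First, I would write $Y(\underline{X})=\sum_{\underline{p}\in \mathbb{N}^e}c_{\underline{p}}X_1^{p_1}\cdots X_e^{p_e}$. Performing the substitution $X_1=x_1$, $X_i=x_ix_1^{-1}$ for $i\in\{2,\ldots,e\}$ monomial-by-monomial, the term with index $\underline{p}$ becomes $c_{\underline{p}}\,x_1^{p_1-p_2-\cdots-p_e}x_2^{p_2}\cdots x_e^{p_e}$, whose exponent is the vector $\phi(\underline{p}):=(p_1-p_2-\cdots-p_e,\,p_2,\ldots,p_e)\in\mathbb{Z}^e$.

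Next I would verify that $\phi(\underline{p})\in C$ for every $\underline{p}\in\mathbb{N}^e$. The conditions $c_i\geq 0$ for $i\geq 2$ in the description of $C$ hold at once because $p_2,\ldots,p_e\geq 0$. The remaining condition $c_1\geq -(c_2+\cdots+c_e)$ becomes $p_1-p_2-\cdots-p_e\geq -(p_2+\cdots+p_e)$, i.e.\ $p_1\geq 0$, which is automatic. Hence $\mathrm{Supp}(y(\underline{x}))\subseteq C$.

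It remains to see that the resulting formal sum is a bona fide element of $\mathbb{K}_C[[\underline{x}]]$ (no collisions, no summability issue). The map $\phi:\mathbb{N}^e\to C\cap\mathbb{Z}^e$ is injective, since its inverse on the image is given by $(q_1,\ldots,q_e)\mapsto(q_1+q_2+\cdots+q_e,\,q_2,\ldots,q_e)$. Thus distinct monomials of $Y$ produce distinct monomials of $y$, and the coefficient of each $\underline{x}^{\underline{q}}$ in $y(\underline{x})$ is simply $c_{\phi^{-1}(\underline{q})}$ (or zero off the image of $\phi$). This yields a well-defined formal sum with support in $C$, hence an element of $\mathbb{K}_C[[\underline{x}]]$.

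I do not anticipate any substantive obstacle: the lemma is essentially a packaging statement justifying the substitution used in Proposition \ref{change}. The only subtlety worth flagging is that a priori the map $X_i\mapsto x_ix_1^{-1}$ only makes sense in a localization of $\mathbb{K}[[\underline{x}]]$; the point of the proof is that the cone $C$ of Proposition \ref{conecone} was tailored precisely to house exponents of the form $\phi(\underline{p})$, and the injectivity of $\phi$ removes any question about infinite sums of coefficients.
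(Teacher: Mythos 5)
Your proof is correct and follows essentially the same route as the paper: substitute monomial-by-monomial, observe that the exponent $(p_1-(p_2+\cdots+p_e),p_2,\ldots,p_e)$ satisfies the defining inequalities of $C$ because $p_1\geq 0$ and $p_i\geq 0$. Your extra observation that the exponent map is injective (so no coefficients collide) is a small refinement the paper leaves implicit, but it does not change the argument.
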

\begin{proof}{.} Write $Y(\underline{X})=\sum_{\underline{a}}
\gamma_{\underline{a}}\underline{X}^{\underline{a}}$,
 then 
$
y(\underline{x})=\sum_{\underline{a}}\gamma_{\underline{a}}
x_{1}^{a_{1}-(a_{2}+\cdots+a_{e})}x_{2}^{a_{2}}\cdots\  x_{e}^{a_{e}}
$. In particular
Supp$(y)=\{(a_{1}-(a_{2}+\cdots+a_{e}),a_{2},\cdots,a_{e}), \underline{a}\in {\rm Supp}(Y)\}.$  As $a_1\geq 0$, we have $a_{1}-(a_{2}+\cdots+a_{e})\geq -(a_{2}+\cdots+a_{e})$, hence $y(\underline{x})\in \mathbb{K}_{C}[[\underline{x}]]$.$\blacksquare$ 
\end{proof}
\noindent The following proposition characterizes the irreducibility of elements of $\mathbb{K}[\![\underline{x}]\!][y]$ in $\mathbb{K}_{C}[\![\underline{x}]\!][y]$. 

\begin{prop}\label{firreducibleF}
With the notations above,  $f$ is irreducible in $\mathbb{K}_{C}[\![\underline{x}]\!][y]$ if and only if $F(X_{1},\cdots,X_{e},y)=f(X_{1},X_{2}X_{1},\cdots,X_{e}X_{1},y)$ is irreducible in $\mathbb{K}[\![\underline{X}]\!][y]$. 
\end{prop}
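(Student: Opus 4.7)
The plan is to exhibit a ring isomorphism $\Phi:\mathbb{K}_{C}[[\underline{x}]][y]\to\mathbb{K}[[\underline{X}]][y]$ that fixes $y$ and carries $f$ to $F$. Since any ring isomorphism preserves irreducibility (it sends factorizations to factorizations and units to units), the equivalence of the two irreducibility statements is then immediate.

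At the level of coefficient rings I would define $\phi:\mathbb{K}_{C}[[\underline{x}]]\to\mathbb{K}[[\underline{X}]]$ on generators by $\phi(x_{1})=X_{1}$ and $\phi(x_{i})=X_{i}X_{1}$ for $2\leq i\leq e$, and in the opposite direction $\psi:\mathbb{K}[[\underline{X}]]\to\mathbb{K}_{C}[[\underline{x}]]$ by $\psi(X_{1})=x_{1}$ and $\psi(X_{i})=x_{i}x_{1}^{-1}$ for $i\geq 2$. The map $\psi$ is well defined and lands in $\mathbb{K}_{C}[[\underline{x}]]$ by Lemma~\ref{support}. For $\phi$, on a monomial $\underline{x}^{\underline{p}}$ with $\underline{p}\in C\cap\mathbb{Z}^{e}$ we have
$$\phi(\underline{x}^{\underline{p}})=X_{1}^{p_{1}}(X_{2}X_{1})^{p_{2}}\cdots(X_{e}X_{1})^{p_{e}}=X_{1}^{p_{1}+p_{2}+\cdots+p_{e}}X_{2}^{p_{2}}\cdots X_{e}^{p_{e}}.$$
The defining inequalities of $C$ give $p_{1}+p_{2}+\cdots+p_{e}\geq 0$ and $p_{i}\geq 0$ for $i\geq 2$, so the image exponent lies in $\mathbb{N}^{e}$; moreover the linear assignment $\underline{p}\mapsto(p_{1}+\cdots+p_{e},p_{2},\ldots,p_{e})$ is invertible over $\mathbb{Z}$, hence injective. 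Thus distinct monomials in an element of $\mathbb{K}_{C}[[\underline{x}]]$ map to distinct monomials in $\mathbb{K}[[\underline{X}]]$, and $\phi$ is a well-defined ring homomorphism. The identities $\phi\circ\psi=\mathrm{id}$ and $\psi\circ\phi=\mathrm{id}$ can be checked on generators, so $\phi$ and $\psi$ are mutually inverse ring isomorphisms.

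Extending $\phi$ coefficient-wise while fixing $y$ produces the ring isomorphism $\Phi:\mathbb{K}_{C}[[\underline{x}]][y]\xrightarrow{\sim}\mathbb{K}[[\underline{X}]][y]$. Direct computation gives
$$\Phi(f)=y^{n}+\phi(a_{1}(\underline{x}))y^{n-1}+\cdots+\phi(a_{n}(\underline{x}))=f(X_{1},X_{2}X_{1},\ldots,X_{e}X_{1},y)=F,$$
so a nontrivial factorization of $f$ in $\mathbb{K}_{C}[[\underline{x}]][y]$ corresponds bijectively, via $\Phi$ and $\Phi^{-1}$, to a nontrivial factorization of $F$ in $\mathbb{K}[[\underline{X}]][y]$. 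This establishes the equivalence.

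The only technical point that is not purely formal is the verification that $\phi$ extends from monomials to a genuine ring map on the whole of $\mathbb{K}_{C}[[\underline{x}]]$, and symmetrically for $\psi$; this is precisely where the line-free shape of $C$ enters, via the injectivity of the exponent map and via Lemma~\ref{support}. Once that is in hand the proof is essentially symbolic.
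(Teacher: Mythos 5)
Your proof is correct, and it reorganizes the paper's argument in a slightly stronger way. The paper proves the two implications separately: given a factorization of $F$ it substitutes $X_i\mapsto x_ix_1^{-1}$ and invokes Lemma~\ref{support} to see the factors land in $\mathbb{K}_C[[\underline{x}]][y]$, and given a factorization of $f$ it substitutes $x_i\mapsto X_iX_1$ and checks the inequality $a_1+\cdots+a_e\geq 0$ to see the factors land in $\mathbb{K}[[\underline{X}]][y]$. You perform exactly these two verifications, but you add the observation that the exponent map $\underline{p}\mapsto(p_1+\cdots+p_e,p_2,\ldots,p_e)$ is a unimodular bijection from $C\cap\mathbb{Z}^e$ onto $\mathbb{N}^e$, so the two substitutions are mutually inverse and $\mathbb{K}_C[[\underline{x}]]\cong\mathbb{K}[[\underline{X}]]$ as rings; the equivalence of irreducibility then comes for free, in one stroke, rather than by transporting factorizations twice. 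This buys a cleaner statement (an isomorphism of coefficient rings, which the paper never makes explicit and which also explains Proposition~\ref{Fandf}) at the cost of one small point you gloss over: checking $\phi\circ\psi=\mathrm{id}$ and $\psi\circ\phi=\mathrm{id}$ ``on generators'' should really be phrased monomial by monomial (equivalently, via the bijectivity of the exponent map), since a homomorphism of these power series rings is only determined by its values on $x_1,\ldots,x_e$ once continuity for the relevant adic/support topology is granted. That is a one-line fix and does not affect correctness.
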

\begin{proof}{.}
Suppose that $f$ is irreducible in $\mathbb{K}_{C}[\![\underline{x}]\!][y]$. If $F$ is reducible in $\mathbb{K}[\![\underline{X}]\!][y]$, then there exist monic  polynomials $G,H\in \mathbb{K}[\![\underline{X}]\!][y]$  such that $F=GH$ and $0<deg_{y}(G),deg_{y}(H)<n$. But $f(x_{1},\cdots,x_{e},y)=F(x_{1},x_{2}x_{1}^{-1},\dots ,x_{e}x_{1}^{-1},y)$. Then:
$$f(x_{1},\cdots,x_{e},y)=G(x_{1},x_{2}x_{1}^{-1},\dots, x_{e}x_{1}^{-1},y)H(x_{1},x_{2}x_{1}^{-1},\dots, x_{e}x_{1}^{-1},y).$$
Let  $g(\underline{x},y)=G(x_{1},x_{2}x_{1}^{-1},\dots, x_{e}x_{1}^{-1},y)$ and $h(\underline{x},y)=H(x_{1},x_{2}x_{1}^{-1},\dots, x_{e}x_{1}^{-1},y)$.  Let $m=deg_{y}(G)$ and write $G(\underline{X},y)=y^{m}+a_{1}(\underline{X})y^{m-1}+\cdots+a_{m}(\underline{X})$, where $a_{i}(\underline{X})\in\mathbb{K}[\![\underline{X}]\!]$ for all $i\in\lbrace 1,\cdots,m\rbrace$. We have:
$$g(\underline{x},y)=y^{m}+a_{1}(x_{1},x_{2}x_{1}^{-1},\dots, x_{e}x_{1}^{-1})y^{m-1}+\cdots+a_{m}(x_{1},x_{2}x_{1}^{-1},\dots, x_{e}x_{1}^{-1})$$
Since $a_{i}(\underline{X})\in\mathbb{K}[\![\underline{X}]\!]$ for all $i=1,\cdots,m$, then by Lemma $\ref{support}$ we get that $a_{i}(x_{1},x_{2}x_{1}^{-1},\dots, x_{e}x_{1}^{-1})\in\mathbb{K}_C[\![\underline{x}]\!]$ for all $i=1,\cdots,m$. It follows that $g\in\mathbb{K}_{C}[\![\underline{x}]\!][y]$. Similarly we can prove that $h\in\mathbb{K}_{C}[\![\underline{x}]\!][y]$. Hence $f=gh$ with $0<deg_{y}(g)=deg_{y}(G)<n$ and $0<deg_{y}(h)=deg_{y}(H)<n=deg_{y}(f)$, and so $f$ is reducible in $\mathbb{K}_{C}[[\underline{x}]][y]$, which is a contradiction.  Conversely suppose that  $F$ is an irreducible polynomial in $\mathbb{K}[\![\underline{X}]\!][y]$. If $f$ is reducible in $\mathbb{K}_{C}[[\underline{x}]][y]$, then there exist  $h_{1},h_{2}\in\mathbb{K}_{C}[\![\underline{x}]\!][y]$ such that $f=h_{1}h_{2}$ with $0< deg_{y}(h_{1}),deg_{y}(h_{2})<deg_{y}(f)$. Given $a(\underline{x})=\sum c_{\underline{a}}x_{1}^{a_{1}}\cdots x_{e}^{a_{e}}\in \mathbb{K}_{C}[\![\underline{x}]\!]$, we have
$$a(X_{1},X_{2}X_{1},\cdots,X_{e}X_{1})=\sum c_{a}X_{1}^{a_{1}}(X_{2}X_{1})^{a_{2}}\cdots (X_{e}X_{1})^{a_{e}}=
\sum c_{a}X_{1}^{a_{1}+a_{2}+\cdots+a_{e}}X_{2}^{a_{2}}\cdots X_{e}^{a_{e}}$$
Since $a(\underline{x})\in\mathbb{K}_{C}[\![\underline{x}]\!]$, then $a_{1}\geq -(a_{2}+\cdots+a_{e})$ for all $(a_{1},\cdots,a_{e})\in Supp(a(\underline{x}))$. It follows that $a_{1}+a_{2}+\cdots+a_{e}\geq 0$ for all $(a_{1},\cdots,a_{e})\in {\rm Supp}(a(\underline{x}))$. Hence, $a(X_{1},X_{2}X_{1},\cdots,X_{e}X_{1})\in\mathbb{K}[\![\underline{X}]\!]$. Then $h_{1}(X_{1},X_{2}X_{1},\cdots,X_{e}X_{1},y),h_{2}(X_{1},X_{2}X_{1},\cdots,X_{e}X_{1},y)\in\mathbb{K}[\![\underline{X}]\!][y]$. But 
$$F(X_{1},\cdots,X_{e},y)=f(X_{1},X_{2}X_{1},\cdots,X_{e}X_{1},y)=h_{1}(X_{1},X_{2}X_{1},\cdots,X_{e}X_{1},y)h_{2}(X_{1},X_{2}X_{1},\cdots,X_{e}X_{1},y).$$
This contradicts the hypothesis.$\blacksquare$
\end{proof}

\begin{rem} With the notations above, if $f(X_1,X_2X_1,\cdots,X_eX_1,y)$ is irreducible in ${\mathbb K}[\![\underline{X}]\!][y]$ then $f$ is irreducible in ${\mathbb K}[\![\underline{x}]\!][y]$. In fact, if $f$ is reducible in  ${\mathbb K}[\![\underline{x}]\!][y]$ then it is so in 
${\mathbb K}_{C}[\![\underline{x}]\!][y]$. This contradicts Proposition \ref{firreducibleF}. This gives a sufficient irreducibility  criterion in ${\mathbb K}[\![\underline{x}]\!][y]$. This criterion is not a necessary condition. For example, $f=y^2-x_1^2-x_1x_2$ is irreducible in ${\mathbb K}[\![x_1,x_2]\!][y]$, but $f(X_1,X_2X_1,y)=y^2-X_1^2-X_1^2X_2=y^2-X_1^2(1+X_2)=(y-X_1(1+X_2)^{\frac{1}{2}})(y+X_1(1+X_2)^{\frac{1}{2}})$. Note that $f=y^2-x_1^2-x_1x_2=y^2-x_1^2(1+x_1^{-1}x_2)$ is irreducible in ${\mathbb K}_C[\![x_1,x_2]\!][y]$.
\end{rem}

\noindent In the following we give a criterion for the polynomial $f$  to be free.
\begin{prop}\label{Fandf}
Suppose that $f$ is a prepared polynomial. If $f$ is irreducible in $\mathbb{K}_{C}[\![\underline{x}]\!][y]$, then it is  free.
\end{prop}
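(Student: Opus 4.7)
The strategy is to combine the earlier results of this section into a short chain. Being ``free'' demands two things of $f$: irreducibility in $\mathbb{K}_{C}[[\underline{x}]][y]$ (which is hypothesis) and the existence of a root in $\mathbb{K}_{C}[[\underline{x}^{\frac{1}{n}}]]$. So the whole work is to produce such a root, and the natural place to get it from is the quasi-ordinary companion $F$ provided by Proposition \ref{change}.

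First I would invoke Proposition \ref{change} to say that $F(\underline{X},y)=f(X_1,X_2X_1,\ldots,X_eX_1,y)$ is a quasi-ordinary polynomial in $\mathbb{K}[[\underline{X}]][y]$. Then I would use Proposition \ref{firreducibleF}, together with the hypothesis that $f$ is irreducible in $\mathbb{K}_{C}[[\underline{x}]][y]$, to conclude that $F$ is irreducible in $\mathbb{K}[[\underline{X}]][y]$. Since $F$ is an irreducible quasi-ordinary polynomial of $y$-degree $n$, the Abhyankar--Jung theorem gives a root $Y(\underline{X})\in\mathbb{K}[[\underline{X}^{\frac{1}{n}}]]$ of $F$.

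Next I would perform the inverse substitution $X_1=x_1$, $X_i=x_ix_1^{-1}$ ($i\geq 2$) and set $y(\underline{x})=Y(x_1,x_2x_1^{-1},\ldots,x_ex_1^{-1})$. Because $F(X_1,X_2,\ldots,X_e,y)$ and $f(x_1,x_2,\ldots,x_e,y)$ agree under this substitution (they were built to do so), $y(\underline{x})$ is a root of $f$ as soon as we confirm it makes sense as an element of $\mathbb{K}_{C}[[\underline{x}^{\frac{1}{n}}]]$. Writing $Y(\underline{X})=\sum_{\underline{a}}\gamma_{\underline{a}}X_1^{\frac{a_1}{n}}\cdots X_e^{\frac{a_e}{n}}$, the substitution produces
\[
y(\underline{x})=\sum_{\underline{a}}\gamma_{\underline{a}}\,x_1^{\frac{a_1-(a_2+\cdots+a_e)}{n}}x_2^{\frac{a_2}{n}}\cdots x_e^{\frac{a_e}{n}},
\]
whose exponents lie in $\frac{1}{n}C$ since $a_i\geq 0$ for $i\geq 2$ and $a_1-(a_2+\cdots+a_e)\geq -(a_2+\cdots+a_e)$. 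This is exactly the calculation of Lemma \ref{support} carried out in the variables $x_i^{1/n}$ instead of $x_i$ (the cone $C$ is unchanged under this scaling), so the same argument applies verbatim and gives $y(\underline{x})\in\mathbb{K}_{C}[[\underline{x}^{\frac{1}{n}}]]$.

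The only genuine subtlety is the last step: one must be careful that taking $n$-th roots commutes with the substitution $X_i\mapsto x_ix_1^{-1}$, i.e.\ that $(x_ix_1^{-1})^{1/n}$ is legitimately interpreted as $x_i^{1/n}x_1^{-1/n}$ inside $\mathbb{K}_{C}[[\underline{x}^{\frac{1}{n}}]]$. But since the cone $C$ contains both the positive $x_i^{1/n}$ direction (for $i\geq 2$) and the negative $x_1^{1/n}$ direction (up to the prescribed amount), the monomial $x_i^{1/n}x_1^{-1/n}$ is a well-defined element of $\mathbb{K}_{C}[[\underline{x}^{\frac{1}{n}}]]$ and Lemma \ref{support} applies without modification. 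Combined with the irreducibility hypothesis, this shows that $f$ is free.
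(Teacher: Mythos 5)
Your proposal is correct and follows exactly the same route as the paper's proof: Proposition \ref{change} to get the quasi-ordinary companion $F$, Proposition \ref{firreducibleF} to transfer irreducibility, Abhyankar--Jung to obtain a root $Z\in\mathbb{K}[[\underline{X}^{\frac{1}{n}}]]$, and the substitution $X_i\mapsto x_ix_1^{-1}$ together with Lemma \ref{support} to land the root in $\mathbb{K}_{C}[[\underline{x}^{\frac{1}{n}}]]$. Your explicit check that the computation of Lemma \ref{support} carries over to fractional exponents (using that $C$ is a cone, hence stable under scaling by $\frac{1}{n}$) is a point the paper leaves implicit, but the argument is the same.
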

\begin{proof}{.} By Proposition $\ref{change}$,  $F(X_{1},\cdots,X_{e},y)=f(X_{1},X_{2}X_{1},\cdots, X_{e}X_{1},y)$ is  a quasi-ordinary polynomial of $ \mathbb{K}[\![\underline{X}]\!][y]$, and by Proposition $\ref{firreducibleF}$ we get that $F$ is an irreducible quasi-ordinary polynomial in $\mathbb{K}[\![\underline{X}]\!][y]$ of degree $n$, then by the  Abhyankar-Jung theorem there exists a formal power series $Z$
 in $\mathbb{K}[\![X_{1}^{\frac{1}{n}},\cdots,X_{e}^{\frac{1}{n}}]\!]$ such that $F(\underline{X},Z(\underline{X}))=0$. But $F(\underline{X},Z(\underline{X}))=f(X_{1},X_{2}X_{1},\cdots, X_{e}X_{1},Z(\underline{X}))$,
then $f(x_{1},x_{2},\cdots,x_{e},Z(x_{1},x_{2}x_{1}^{-1},\dots, x_{e}x_{1}^{-1}))=0$. It follows that $Z(x_{1},x_{2}x_{1}^{-1},\dots, x_{e}x_{1}^{-1})$ is a solution of $f(x_{1},\cdots,x_{e},y)=0$. Since $Z(\underline{X})\in\mathbb{K}[\![\underline{X}^{\frac{1}{n}}]\!]$, then by Lemma $\ref{support}$ we deduce that $Z(x_{1},x_{2}x_{1}^{-1},\dots, x_{e}x_{1}^{-1})\in\mathbb{K}_{C}[\![\underline{x}^{\frac{1}{n}}]\!]$. This proves our assertion.$\blacksquare$
\end{proof}

\begin{rem} The criterion of Proposition \ref{Fandf} is effective. In fact, in order to decide if $f$ is irreducible in ${\mathbb K}_C[\![\underline{x}]\!][y]$ one has to decide if $f(X_1,X_2X_1,\cdots,X_eX_1,y)$ is irreducible in  ${\mathbb K}[\![\underline{X}]\!][y]$. But 
$f(X_1,X_2X_1,\cdots,X_eX_1,y)$ is a quasi-ordinary polynomial, hence we can apply the irreducibility criterion given in \cite{A.Assi1}.
\end{rem}

\begin{rem} In Propositions \ref{firreducibleF} and \ref{Fandf}, if $F(\underline{X})=f(X_1,X_2X_1,\cdots,X_eX_1)$ is not irreducible, then it decomposes into quasi-ordinary polynomials, hence $f$ itself decomposes into free polynomials in ${\mathbb K}_C[\![\underline{x}]\!][y]$. As for reducible quasi-ordinary polynomials, we can  associate with $f$ the set of  characteristic sequences of its irreducible components as well as a semigroup defined from the set of semigroups of these components.
\end{rem}

\noindent Next we prove that the approximate roots of a prepared free polynomial with respect to its $d$-sequence are free polynomials 

\begin{prop}\label{19}
Suppose that $f$ is prepared and let $1\leq k\leq r$.  If $f$ is free in $\mathbb{K}_C[\![\underline{x}]\!][y]$ then App$(f,d_k)$  is  also free.
\end{prop}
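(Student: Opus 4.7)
The plan is to transfer the problem, via the birational substitution used in Proposition \ref{change}, to the quasi-ordinary setting where the Abhyankar--Moh theory of approximate roots is already available, and then pull the conclusion back to $\mathbb{K}_C[[\underline{x}]][y]$. Set $g:=\mathrm{App}(f,d)$ and introduce $F(\underline{X},y):=f(X_1,X_2X_1,\ldots,X_eX_1,y)$ together with $G(\underline{X},y):=g(X_1,X_2X_1,\ldots,X_eX_1,y)$. Since the substitution fixes $y$ and acts only on the coefficients in $\underline{x}$, it preserves $y$-degrees; in particular $\deg_y G=n/d$ and $\deg_y(F-G^d)=\deg_y(f-g^d)<n-n/d$. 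By the uniqueness part of Proposition 1, this forces $G=\mathrm{App}(F,d)$.

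Next, by Propositions \ref{change} and \ref{firreducibleF}, the hypothesis that $f$ is prepared and free makes $F$ an irreducible quasi-ordinary polynomial in $\mathbb{K}[[\underline{X}]][y]$. The key step I would then invoke is the Abhyankar--Moh theory for quasi-ordinary polynomials (\cite{A.Assi1}): the $d$-th approximate root of an irreducible quasi-ordinary polynomial is itself an irreducible quasi-ordinary polynomial. Applied to $F$, this gives that $G=\mathrm{App}(F,d)$ is irreducible and quasi-ordinary in $\mathbb{K}[[\underline{X}]][y]$, of degree $n/d$.

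It then remains to pull this back through the substitution. Proposition \ref{firreducibleF} applied to the pair $(g,G)$ immediately yields that $g$ is irreducible in $\mathbb{K}_C[[\underline{x}]][y]$. To exhibit a root of $g$ in $\mathbb{K}_C[[\underline{x}^{1/(n/d)}]]$, set $n':=n/d$ and apply Abhyankar--Jung to the irreducible quasi-ordinary polynomial $G$, producing $Z(\underline{X})\in \mathbb{K}[[\underline{X}^{1/n'}]]$ with $G(\underline{X},Z(\underline{X}))=0$. Writing $Z(\underline{X})=\widetilde{Z}(X_1^{1/n'},\ldots,X_e^{1/n'})$ with $\widetilde{Z}\in \mathbb{K}[[\underline{U}]]$, the substitution $X_1=x_1, X_i=x_ix_1^{-1}$ ($i\geq 2$) sends $Z$ to $\widetilde{Z}(u_1,u_2u_1^{-1},\ldots,u_eu_1^{-1})$ in the new variables $u_i:=x_i^{1/n'}$. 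Lemma \ref{support} applied to $\widetilde{Z}$ places this element in $\mathbb{K}_C[[\underline{u}]]=\mathbb{K}_C[[\underline{x}^{1/n'}]]$, and it is a root of $g$ because $g(x_1,\ldots,x_e,y)=G(x_1,x_2x_1^{-1},\ldots,x_ex_1^{-1},y)$.

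The main obstacle is the middle step, namely the assertion that the approximate root of an irreducible quasi-ordinary polynomial is again irreducible and quasi-ordinary. In one variable this is classical Abhyankar--Moh; the several-variable quasi-ordinary generalization is the content of \cite{A.Assi1} and is used here as a black box. A secondary technical point is the verification that the reciprocal substitution sends $\mathbb{K}[[\underline{X}^{1/n'}]]$ into $\mathbb{K}_C[[\underline{x}^{1/n'}]]$ rather than only into $\mathbb{K}_C[[\underline{x}^{1/n}]]$, but this is a direct bookkeeping argument obtained by passing to the variables $u_i=x_i^{1/n'}$ and reducing to Lemma \ref{support}.
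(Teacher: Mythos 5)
Your proposal is correct and follows essentially the same route as the paper: transfer to the quasi-ordinary polynomial $F$ via the substitution $x_i\mapsto X_iX_1$, identify $\mathrm{App}(f,d)$ with $\mathrm{App}(F,d)$ under this substitution, invoke the quasi-ordinary Abhyankar--Moh result that the approximate root is again irreducible quasi-ordinary, and pull the root and irreducibility back through Lemma \ref{support} and Proposition \ref{firreducibleF}. The only cosmetic difference is the direction of the identification of approximate roots (you push $g$ forward using the degree criterion, the paper pulls the $G$-adic expansion of $F$ back), which changes nothing of substance.
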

\begin{proof}{.} By Propositions \ref{change}, \ref{Fandf} and Lemma \ref{firreducibleF}, the polynomial  $F(\underline{X},y)=f(X_{1},X_{2}X_{1},\cdots, X_{e}X_{1},y)$ is  an irreducible quasi-ordinary  polynomial of $ \mathbb{K}[\![\underline{X}]\!][y]$. Let $G_k={\rm App}(F,d_k)$.  We have
$F=G_k^{d_k}+C_{2}(\underline{X},y)G_k^{d_k-2}+\cdots+C_{d_k}(\underline{X},y)$, with deg$_{y}(C_{i})<\frac{n}{d_k}$ for all $i\in\{2,\cdots,d_k\}$. Hence, 
$
f(x_{1},\cdots,x_{e},y)=F(x_{1},x_{2}x_{1}^{-1},\dots ,x_{e}x_{1}^{-1},y)
=g_k^{d}(\underline{x},y)+C_{2}'(\underline{x},y)g_k^{d_k-1}(\underline{x},y)+\cdots+C_{d_k}'(\underline{x},y)
$
where $g_k(\underline{x},y)=G_k(x_{1},x_{2}x_{1}^{-1},$
$\dots ,x_{e}x_{1}^{-1},y)$ and $C_{i}'(\underline{x},y)=C_{i}(x_{1},x_{2}x_{1}^{-1},\dots ,x_{e}x_{1}^{-1},y)$ for all $i\in\{2,\cdots,d\}$. By lemma $\ref{support}$ we have $g_k,C_{i}'\in\mathbb{K}_{C}[\![\underline{x}]\!][y]$ for all $i\in\{2,\cdots,n\}$. Since $deg_{y}(C_{i}')<\frac{n}{d}$ for all $i\in\{2,\cdots,d\}$ and deg$_{y}(g)=\frac{n}{d}$ we get that $g={\rm App}(f,d)$ in $\mathbb{K}_{C}[\![\underline{x}]\!][y]$.  But $f\in\mathbb{K}[\![\underline{x}]\!][y]$ and $\mathbb{K}[\![\underline{x}]\!][y]\subseteq \mathbb{K}_{C}[\![\underline{x}]\!][y]$, then $g={\rm App}(f,d)$ in $\mathbb{K}[\![\underline{x}]\!][y]$. Since $G$ is the approximate root of an irreducible quasi-ordinary polynomial then it is an irreducible quasi-ordinary polynomial, and  $G$ admits a root in $\mathbb{K}[\![\underline{x}^{\frac{1}{\frac{n}{d_k}}}]\!]$. But $g_k(\underline{x},y)=G_k(x_{1},x_{2}x_{1}^{-1},\dots ,x_{e}x_{1}^{-1},y)$, then by a similar argument as in Proposition $\ref{Fandf}$ we get that $g$ admits a root in $\mathbb{K}_{C}[\![\underline{x}^{\frac{1}{\frac{n}{d}}}]\!]$. Moreover $g_k$ is irreducible in $\mathbb{K}_{C}[\![\underline{x}]\!][y]$ by lemma $\ref{firreducibleF}$. Hence $g_k$ is free.$\blacksquare$
\end{proof}

\begin{rem} The result of Proposition \ref{19} is false if we consider any $d^{th}$ approximate root of $f$, even for $e=1$. For a counterexample, see \cite{B}, Theorem 5.

\end{rem}

\begin{ex} Let $f(x_1,x_2,y)=(y^2-x_1^3)^2-4x_1^4x_2y-x_1^5x_ 2^2$. Then $f$ is irreducible in ${\mathbb K}[\![x_1,x_2]\!][y]$ as it is the minimal polynomial of $y_1=x_1^{\frac{6}{4}}+x_1^{\frac{5}{4}}x_2^{\frac{2}{4}}$ over ${\mathbb K}(\!(x_1,x_2)\!)$, and the other solutions are given by $y_2=x_1^{\frac{6}{4}}-x_1^{\frac{5}{4}}x_2^{\frac{2}{4}}$, $y_3=-x_1^{\frac{6}{4}}+x_1^{\frac{5}{4}}x_2^{\frac{2}{4}}$, and $y_4=-x_1^{\frac{6}{4}}-x_1^{\frac{5}{4}}x_2^{\frac{2}{4}}$. From this we can verify that $f$ is prepared. Now  $F(X_1,X_2,y)=f(X_1,X_2X_1,y)=(y^2-X_1^3)^2-4X_1^5X_2y-X_1^7X_2^2$ whose solutions are given by $Y_i=y_i(X_1, X_1X_2)$, hence $F(X_1,X_2,y)$ is an irreducible quasi-ordinary polynomial (the set of characteristic exponents is given by $\lbrace (6,0), (7,2)\rbrace $  and the semigroup is generated by $(4,0),(0,4),(6,0), (14,2)=O(F,y^2-X_1^3)$). It follows that $f$ is irreducible in ${\mathbb K}_C[\![x_1,x_2,y]\!] ($the set of characteristic exponents is given by $\lbrace (6,0), (5,2)\rbrace $ and the semigroup is generated by $(4,0),(0,4),(6,0), (11,2)=O(f,y^2-x_1^3={\rm App}(f,2)))$.
\end{ex}

\noindent Acknowledgments. The authors would like to thank the anonymous referee for his valuable comments on the paper. 

\end{document}